\def\aiOrcid{\hspace*{.4mm}\includegraphics[scale=.5]{orcid_16x16.png}}
\definecolor{orcidlogocol}{HTML}{A6CE39}
\def\orcidID#1{\href{https://orcid.org/#1}{\textcolor{orcidlogocol}{\aiOrcid} }}
\definecolor{darkblue}{rgb}{0.0, 0.0, 0}
\newcommand{\Tau}{\scriptsize{\ensuremath{\mathcal{T}}}\normalsize\xspace}
\newcommand{\subTau}[1]{\scriptsize{\ensuremath{\mathcal{T}_{#1}}}\normalsize\xspace}
\newcommand{\cil}[1]{\ensuremath \lceil #1 \rceil} 
\newcommand{\bg}{\ensuremath{\operatorname{\texttt{BurnGuess}}}\xspace}
\newcommand{\ns}{\ensuremath{\operatorname{\texttt{no-schedule}}}\xspace}
\newcommand{\g}{g}
\newcommand{\btemp}{\ensuremath{B_{temp}}}
\newcommand{\lett}[2]{#1 $\leftarrow$ #2}
\newtheorem{observation}{Observation}
\begin{document}

\author{Shahin Kamali, Avery Miller, Kenny Zhang}
\institute{University of Manitoba}
\institute{University of Manitoba, Winnipeg, MB, Canada  \\ \ \\
\email{ \{shahin.kamali,avery.miller\}@umanitoba.ca, \\ zhangyt3@myumanitoba.ca}}

\title{Burning Two Worlds 
\subtitle{Algorithms for Burning Dense and Tree-like Graphs}}

\maketitle
\begin{abstract}
Graph burning is a simple model for the spread of social influence in networks. The objective is to measure how quickly a fire (e.g., a piece of fake news) can be spread in a network. The burning process takes place in discrete rounds. In each round, a new fire breaks out at a selected vertex and burns it. Meanwhile, the old fires extend to their adjacent vertices and burn them. A \emph{burning schedule} selects where the new fire breaks out in each round, and the \emph{burning problem} asks for a schedule that burns all vertices in a minimum number of rounds, termed the \emph{burning number} of the graph. The burning problem is known to be NP-hard even when the graph is a tree or a disjoint set of paths. For connected graphs, it has been conjectured~\cite{6BonatoJR14} that burning takes at most $\lceil \sqrt{n} \ \rceil$ rounds.  

In this paper, we approach the algorithmic study of graph burning from two directions. 
First, we consider graphs with minimum degree $\delta$. We present an algorithm that burns any graph of size $n$ in at most $\sqrt{\frac{24n}{\delta+1}}$ rounds. In particular, for dense graphs with $\delta \in \Theta(n)$, all vertices are burned in a constant number of rounds. More interestingly, even when $\delta$ is a constant that is independent of the graph size, our algorithm answers the graph-burning conjecture in the affirmative by burning the graph in at most $\lceil \sqrt{n} \rceil$ rounds.
In the second part of the paper, we consider burning graphs with bounded path-length or tree-length. These include many graph families including connected interval graphs (with path-length 1) and connected chordal graphs (with tree-length 1). 
We show that any graph with path-length $pl$ and diameter $d$ can be burned in $\lceil \sqrt{d-1} \rceil + pl$ rounds. Our algorithm ensures an approximation ratio of $1+o(1)$ for graphs of bounded path-length. 
We introduce another algorithm that achieves an approximation ratio of $2+o(1)$ for burning graphs of bounded tree-length. The approximation factor of our algorithms are improvements over the best existing approximation factor of 3 for burning general graphs.


\keywords{Graph Algorithms\and
Approximation Algorithms	 \and
Graph Burning Problem \and
\ \\ Social Contagion \and
Path-length \and
Tree-length }

\end{abstract}

\section{Introduction}

With the rapid growth of social networks in the past decade, numerous approaches have been proposed to study social influence in these networks \cite{BondFJKMSF12,Fajardo2013,Kramer12,Kramer14}. These studies are focused on how fast a contagion can spread in a network. A contagion can be an emotional state or a piece of data such a political opinion, a piece of fake news, a meme, or gossip. Interestingly, the spread of a contagion does not require point-to-point communication. For example, an experimental study on Facebook suggests that users can experience different emotional states as a result of being exposed to other users' posts, and this happens without direct communciation between users and even without their awareness \cite{Kramer14}. 

Given the fact that a contagion is distributed without the active involvement and awareness of users, one can argue that it is merely defined by the structure of the underlying network \cite{6BonatoJR14}.
A graph's \emph{burning number} has been suggested as a parameter that measures how prone a social network is to the spread of a contagion, which is modeled via a set of fires. Given an undirected and unweighted graph that models a social network, the fires spread in the network in synchronous rounds in the following manner. In round 1, a fire is initiated at a vertex; a vertex at which a fire is started is called an \emph{activator}. In each round that follows, two events take place. First, all existing fires spread to their neighbouring vertices, e.g., in round 2, the neighboring vertices of the first activator will be burned (i.e., they are now on fire). Second, a new fire can be started in some other part of the network, that is, a new vertex is selected as an activator at which a new fire is initiated. This process continues until a round where all vertices are on fire, at which time we say the burning `completes'. The choice of activators affects how quickly the burning process completes. A \emph{burning schedule} specifies a \emph{burning sequence} of vertices where the $i$'th vertex in the sequence is the activator in round $i$. 

The \emph{burning number} of a graph $G$, denoted by $bn(G)$, is the minimum number of rounds required to complete the burning of $G$. The graph burning problem asks for a burning schedule that completes in $bn(G)$ rounds. Unfortunately, this problem is NP-hard even for simple graphs such as trees or disjoint sets of paths \cite{4BessyBJRR17}. So, the focus of this paper is on algorithms that provide close-to-optimal solutions, that is, algorithms that burn graphs in a small (but not necessarily optimal) number of rounds.

\begin{figure}
\centering
\includegraphics[scale=.6,trim={0cm 20.1cm 5cm 17.18cm}, clip, scale=.76]{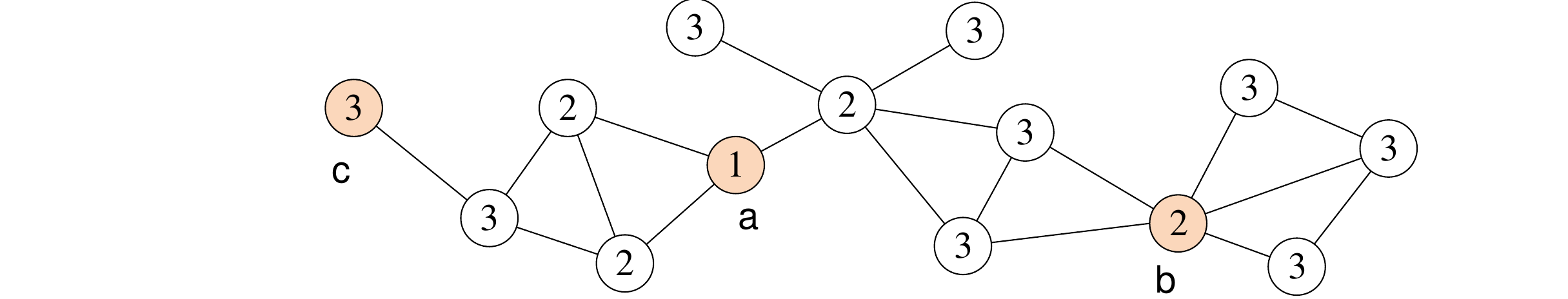}
\caption{Burning a graph using the burning schedule $\langle a, b, c\rangle$. The number at each vertex $x$ indicates the round at which the fire arrives/starts at $x$. The burning completes in 3 rounds.}
\end{figure}

\subsection*{Previous work}
The graph burning problem was introduced by Bonato et al. \cite{6BonatoJR14,5BonatoJR16} as a way to model the spread of a contagion in social networks. 
Bonato et al.~\cite{6BonatoJR14} proved that the burning number of any connected graph is at most $2\lceil\sqrt{n}\rceil-1$, and conjectured that it is always at most $\lceil \sqrt{n} \rceil$.  
Land and Lu improved the upper bound to $\frac{\sqrt{6}}{2} \sqrt{n}$ \cite{LandL16}. The conjecture, known as graph burning conjecture, is still open but verified for basic graph families~\cite{2017arXiv170709968B,DSSS18}. 
Bessy et al.~\cite{4BessyBJRR17} showed that the burning problem is NP-complete, and it remains NP-hard for simple graph families such as graphs with maximum degree three, spider graphs, and path forests. Recently, several heuristics were experimentally studied~\cite{Heuristics2019}. Bonato and Kamali~\cite{BonaKam} studied approximation algorithms for the problem. Using a simple algorithm inspired by the $k$-center problem (see, for example, \cite{Vazir0004338}), they showed that there is a polynomial time algorithm that burns any graph $G$ in at most $3bn(G)$ rounds. They also provided a 2-approximation algorithm for trees and a polynomial time approximate scheme (PTAS) for path-forests.

A line of research has been focused on characterizing the burning number for different graph families \cite{grid,5BonatoJR16,1MitschePR17,2017arXiv170709968B,product18,Paterson,ThetaGraphs19}.
This includes grids graphs~\cite{product18,grid} and more generally Cartesian product and the strong product of graphs~\cite{1MitschePR17,product18}, binomial random graphs~\cite{1MitschePR17}, random geometric graphs~\cite{1MitschePR17}, spider graphs~\cite{4BessyBJRR17,2017arXiv170709968B,DSSS18}, path-forests~\cite{4BessyBJRR17,2017arXiv170709968B,BonaKam}, generalized Petersen graphs~\cite{Paterson}, and Theta graphs~\cite{ThetaGraphs19}.

\subsection*{Contributions}
In this paper, we approach the algorithmic study of graph burning from two directions. 
In section~\ref{sect:dense}, we consider dense graphs, i.e., graphs in which we have a given lower bound $\delta$ on the minimum degree. We provide an algorithm that burns such graphs on $n$ vertices in at most $\sqrt{\frac{24n}{\delta+1}}$ rounds. Our result shows how much faster we can burn a graph with high degree, e.g., if $\delta \in \Theta(n)$, we can burn the graph in constant number of rounds. More interestingly, even when $\delta$ is a constant that is independent of the graph size, our algorithm answers the graph-burning conjecture of Bonato et al.~\cite{6BonatoJR14} in the affirmative by burning the graph in at most $\lceil \sqrt{n}\ \rceil$ rounds. 

In Section~\ref{sect:sparse}, we provide parameterized algorithms for burning graphs with small path-length and tree-length. A graph has path-length at most $pl$ (respectively tree-length at most $tl$), if there is a Robertson-Seymour path decomposition (respectively tree decomposition) of $G$ such that the distance between any two vertices located in the same bag of the decomposition is at most $pl$ (respectively $tl$). Intuitively speaking, these are graphs that can be transformed into a path (respectively tree) by contracting groups of vertices that are all at close to each other. A formal definition can be found in Section~\ref{sect:sparse}. Graphs with small path-length and tree-length span several well-known families of graphs. In particular, a connected graph is an interval graph if and only if its path-length is at most 1~\cite{gilmoreHoffman1964,phdthesisTreeLength}, and a chordal graph if and only if its tree-length is at most 1~\cite{GAVRIL74,phdthesisTreeLength}. 

We provide algorithms that burn graphs of bounded path-length and tree-length. First, we observe that if the diameter is bounded by a constant, an optimal burning schedule can be computed in polynomial time, using an exhaustive approach. So, we focus on a more interesting \emph{asymptotic setting} where the diameter of the graph is asymptotically large.
We show that any graph $G$ of diameter $d$ and path-length at most $pl$ can be burned in at most $\lceil \sqrt{d-1}\rceil + pl \leq \lceil \sqrt{n}\rceil + pl $ rounds. Since $\lceil \sqrt{d}  \rceil$ is a lower bound for the burning number of $G$, our algorithm achieves an approximation factor of 
$1+o(1)$ for graphs of bounded path-length. 
In particular, our algorithm achieves an almost-optimal solution for burning connected interval graphs. 
Moreover, we present an approximation algorithm for burning graphs of small tree-length. For a graph of tree-length at most $tl$, our algorithm has an approximation factor of at most $2+(4tl+1)/d$, which is 
$2+o(1)$ for graphs of bounded tree-length (e.g., chordal graphs). 
The approximation factors achieved by our algorithms are improvements over the best approximation ratio of 3 presented for arbitrary graphs~\cite{BonaKam}.

\section{Dense Graphs}\label{sect:dense}

	For any graph $G$, the \emph{degree} of a vertex is the number of edges incident to $v$. In this section, we present an algorithm that constructs a burning schedule whose length is parameterized by the minimum degree of the graph, which is defined as the minimum vertex degree taken over all of its vertices. As expected, increasing the minimum degree of the graph will decrease the number of rounds needed to burn all of the vertices, and our result sheds light on the nature of this tradeoff.
	
	An interesting consequence of our algorithm is that we make progress towards resolving the conjecture from \cite{6BonatoJR14} that every graph on $n$ vertices can be burned in at most $\lceil \sqrt{n} \rceil$ rounds. We prove that the conjecture holds for all graphs with minimum degree at least 23.
	
	To describe and analyze our algorithm, we denote by $d(v,w)$ the length of the shortest path between $v$ and $w$ in $G$, i.e., the distance between $v$ and $w$. We denote by $N_r(v)$ the set of vertices whose distance from $v$ is at most $r$. For any vertex $v \in G$, let $ecc(v)$ denote the eccentricity of $v$, i.e., the maximum distance between $v$ and any other vertex of $G$. Let $rad(G)$ denote the radius of $G$, i.e., the minimum eccentricity taken over all vertices in $G$.
	
	The idea of the algorithm is to pick a set of activators $A$ that are adequately spread apart in the graph. More specifically, for some even integer $2r$, our algorithm picks a maximal set of vertices such that the distance between any pair is greater than $2r$. This can be done efficiently on any graph $G$ in a greedy manner: pick any vertex $v$, add $v$ to $A$, remove $N_{2r}(v)$ from $G$, and repeat the above until $G$ is empty. 
	
	Since $A$ is maximal, all vertices in $G$ are within distance $2r$ from some vertex in $A$. So, if we burn one activator from $A$ in each of the first $|A|$ rounds, the fire will spread and burn all vertices within an additional $2r$ rounds (we can activate an arbitrary vertex in each of these additional rounds). The key is to pick a value for $r$ such that $|A| + 2r$ is small. This is the goal for the rest of this section.
	
	We start by finding an upper bound on $|A|$ with respect to $r$. This bound will rely on the following fact that a lower bound on the degree implies a lower bound on the size of $N_r(v)$ for any $v \in G$.
	\begin{proposition}\label{prop:BallLower}
		Consider any graph $G$ that has minimum degree $\delta$. For any vertex $v \in G$ and any $r \in \{1,\ldots,ecc(v)\}$, we have that $N_r(v) \geq \lfloor \frac{r+2}{3}\rfloor(\delta + 1)$.
	\end{proposition}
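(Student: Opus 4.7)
The plan is to exhibit $\lfloor (r+2)/3 \rfloor$ vertices inside $N_r(v)$ whose closed $1$-neighborhoods are pairwise disjoint and each of cardinality at least $\delta + 1$; summing these disjoint contributions then gives the claimed bound. The natural way to find such vertices is to walk along a shortest path emanating from $v$ and mark every third vertex, so that consecutive markers are far enough apart for their radius-$1$ balls to be disjoint, yet close enough to $v$ for those balls to remain inside $N_r(v)$.

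First, I would fix a shortest path $v = v_0, v_1, \ldots, v_r$ of length $r$ in $G$, which exists because $r \leq ecc(v)$ guarantees a vertex at distance at least $r$ from $v$. Setting $k = \lfloor (r-1)/3 \rfloor$, the candidate markers are $v_0, v_3, v_6, \ldots, v_{3k}$, a total of $k + 1 = \lfloor (r+2)/3 \rfloor$ vertices. Since every prefix of a shortest path is itself a shortest path, $d(v, v_{3i}) = 3i$ for each $i \in \{0, \ldots, k\}$. I would then verify three properties of these markers: (i) disjointness, since the triangle inequality combined with $d(v, v_{3i}) = 3i$ forces $d(v_{3i}, v_{3j}) \geq 3|j - i| \geq 3$ for $i \neq j$, so no vertex can lie in both $N_1(v_{3i})$ and $N_1(v_{3j})$; (ii) size at least $\delta + 1$, since $v_{3i}$ together with its at least $\delta$ neighbors all lie in $N_1(v_{3i})$; and (iii) containment in $N_r(v)$, since any $x \in N_1(v_{3i})$ satisfies $d(v, x) \leq 3i + 1 \leq 3k + 1 \leq r$ by the choice of $k$.

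The only genuine subtlety is bookkeeping the floor expression: the containment step requires $3k + 1 \leq r$, i.e.\ $k \leq (r-1)/3$, which is exactly why $k = \lfloor (r-1)/3 \rfloor$ is chosen, making the marker count $\lfloor (r-1)/3 \rfloor + 1 = \lfloor (r+2)/3 \rfloor$. Summing disjoint contributions then yields $|N_r(v)| \geq (k+1)(\delta + 1) = \lfloor (r+2)/3 \rfloor (\delta + 1)$. There is no deeper geometric obstacle: spacing centers at graph-distance at least $3$ trivially forces disjoint $1$-balls, and a shortest path of length $r$ always supplies enough well-spaced centers to match the stated count.
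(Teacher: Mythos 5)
Your proof is correct and uses essentially the same mechanism as the paper's: the paper partitions the BFS ball into width-3 distance slabs $S_j = L_{3j-1} \cup L_{3j} \cup L_{3j+1}$ and lower-bounds each by the closed neighborhood of an arbitrary vertex at level $3j$; your closed $1$-balls $N_1(v_{3i})$ are exactly such neighborhoods, and they sit inside the corresponding slabs, so the disjointness and $(\delta+1)$-size arguments coincide. The floor bookkeeping, the count of $\lfloor (r+2)/3 \rfloor$ disjoint contributions, and the use of $r \le ecc(v)$ to guarantee a radial path (or, in the paper's phrasing, non-empty levels $L_{3j}$) are identical, so this is the same proof with a slightly more explicit, path-based packaging.
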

\begin{proof}
	Let $v$ be an arbitrary vertex in $G$. Define $L_i = \{w\ |\ d(v,w) = i\}$, i.e., $L_i$ is the set of vertices whose distance from $v$ in $G$ is exactly $i$. Define $S_0 = L_0 \cup L_1$, i.e., $S_0$ consists of $v$ and its neighbours. Note that $|S_0| \geq \delta+1$ since $v$ has degree at least $\delta$. Next, for each $j \in \{1,\ldots,\lfloor \frac{r+2}{3} \rfloor-1\}$, define $S_j = L_{3j-1} \cup L_{3j} \cup L_{3j+1}$. Note that, for all $j \in \{1,\ldots,\lfloor \frac{r+2}{3} \rfloor-1\}$, we have $3j+1 \leq 3(\lfloor\frac{r+2}{3} \rfloor-1) + 1 \leq r \leq ecc(v)$. The fact that $3j+1 \leq r$ means that all vertices in $L_{3j-1} \cup L_{3j} \cup L_{3j+1}$ are within distance $r$ from $v$, i.e., $S_j \in N_r(v)$ for each $j \in \{1,\ldots,\lfloor \frac{r+2}{3} \rfloor-1\}$. Moreover, the fact that $3j+1 \leq ecc(v)$ means that each of $L_{3j-1}$, $L_{3j}$, and $L_{3j+1}$ is non-empty. In particular, we can pick an arbitrary vertex in $L_{3j}$, which by assumption has at least $\delta$ neighbours, and each of these neighbours must be in one of $L_{3j-1}$, $L_{3j}$, or $L_{3j+1}$ (i.e., in $S_j$), which implies that $|S_j| \geq \delta+1$ for all $j \in \{1,\ldots,\lfloor \frac{r+2}{3} \rfloor-1\}$. Finally, by construction, $S_j \cap S_{j'} = \emptyset$ for any two distinct $j,j' \in \{0,\ldots,\lfloor \frac{r+2}{3} \rfloor-1\}$. So $|N_r(v)| \geq \sum_{j=0}^{\lfloor\frac{r+2}{3} \rfloor-1} |S_j| \geq \lfloor\frac{r+2}{3} \rfloor(\delta+1)$.\qed
\end{proof}
Next, we apply the preceding lower bound to $N_r(v)$ for each $v \in A$, and then use the fact that these neighbourhoods are disjoint to find an upper bound on $|A|$. 
	\begin{lemma}\label{lem:BBound}
		Consider any graph $G$ on $n$ vertices that has minimum degree $\delta$. Suppose that $A$ is a subset of the vertices of $G$ such that, for some $r \in \{1,\ldots,rad(G)\}$, the distance between each pair of vertices in $A$ is greater than $2r$. Then $|A| \leq \frac{3n}{r(\delta+1)}$.
	\end{lemma}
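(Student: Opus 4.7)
The plan is straightforward: use the separation condition on $A$ to get disjoint balls around the vertices of $A$, apply Proposition~\ref{prop:BallLower} to lower-bound the size of each ball, and then sum.

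First I would argue that the balls $\{N_r(v) : v \in A\}$ are pairwise disjoint. Suppose, for contradiction, that some vertex $u$ lies in both $N_r(v)$ and $N_r(w)$ for distinct $v,w \in A$. Then by the triangle inequality, $d(v,w) \leq d(v,u) + d(u,w) \leq 2r$, contradicting the hypothesis that vertices in $A$ are at pairwise distance strictly greater than $2r$.

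Next, I want to apply Proposition~\ref{prop:BallLower} to each $v \in A$. For this, I need $r \in \{1,\ldots,ecc(v)\}$; this holds because $r \leq rad(G) \leq ecc(v)$ by definition of the radius. Therefore $|N_r(v)| \geq \lfloor (r+2)/3 \rfloor (\delta+1)$ for every $v \in A$. Combining this with disjointness,
\begin{equation*}
n \;\geq\; \Bigl|\bigcup_{v \in A} N_r(v)\Bigr| \;=\; \sum_{v \in A} |N_r(v)| \;\geq\; |A| \cdot \left\lfloor \frac{r+2}{3} \right\rfloor (\delta+1),
\end{equation*}
which gives $|A| \leq \frac{n}{\lfloor (r+2)/3 \rfloor(\delta+1)}$.

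Finally, I would dispose of the floor. Writing $r = 3q+s$ with $s \in \{0,1,2\}$, a short case check shows $\lfloor (r+2)/3 \rfloor \geq r/3$ in each case, so $\frac{1}{\lfloor (r+2)/3 \rfloor} \leq \frac{3}{r}$, yielding the desired bound $|A| \leq \frac{3n}{r(\delta+1)}$. The only subtlety here is bookkeeping: verifying that $r \leq rad(G)$ legitimately places us in the hypothesis range of Proposition~\ref{prop:BallLower}, and confirming that the floor function does not swallow a constant factor. Neither step is hard, so I don't anticipate a real obstacle in the proof.
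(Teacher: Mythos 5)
Your proof is correct and follows essentially the same route as the paper: disjointness of the $r$-balls around vertices of $A$, a lower bound on each ball via Proposition~\ref{prop:BallLower}, and the observation that $\lfloor (r+2)/3 \rfloor \geq r/3$. You merely make the triangle-inequality and floor-function steps a bit more explicit than the paper does.
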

\begin{proof}
	Denote by $v_1,\ldots,v_{|A|}$ the vertices in $A$. Since the distance between each pair of these vertices is greater than $2r$, the sets $N_r(v_1),\ldots,N_r(v_{|A|})$ are disjoint, so $n \geq \sum_{i=1}^{|A|} |N_r(v_i)|$. As $r \leq rad(G)$, it follows that $r \leq ecc(v_i)$ for each $i \in \{1,\ldots,|A|\}$, so, by Proposition \ref{prop:BallLower}, we know that $|N_r(v_i)| \geq \lfloor \frac{r+2}{3}\rfloor(\delta + 1) \geq \frac{r(\delta+1)}{3}$. Therefore, $n \geq \sum_{i=1}^{|A|} |N_r(v_i)| \geq |A|\frac{r(\delta+1)}{3}$, which implies the desired result. \qed
\end{proof}
	
Recall that the burning sequence generated by our algorithm will burn the entire graph within $|A|+2r$ rounds. Finding the value of $r$ that minimizes this expression leads to the following upper bound on the length of the burning sequence.

\begin{theorem}
	For any graph $G$ on $n$ vertices that has minimum degree $\delta$, our algorithm produces a burning sequence with length at most $\left\lceil \sqrt{\frac{24n}{\delta+1}}\ \right\rceil$. 
\end{theorem}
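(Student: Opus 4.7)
The plan is a straightforward optimization. The algorithm's output has length $|A| + 2r$ for any chosen $r$. Writing $C := \sqrt{24n/(\delta+1)}$ and applying Lemma~\ref{lem:BBound}, I obtain $|A| + 2r \le \lfloor C^2/(8r) \rfloor + 2r$ for every $r \in \{1,\ldots,rad(G)\}$, where the floor reflects the fact that $|A|$ is a non-negative integer. Treating $r$ as continuous, $C^2/(8r) + 2r$ is minimized at $r^{*} = C/4$ with minimum value exactly $C$, so I would choose $r := \lceil C/4 \rceil$ to track this continuous optimum from above.

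The main challenge will be verifying that the integer choice still matches the target $\lceil C \rceil$ rather than a slightly weaker $C + 2$. With $r = \lceil C/4 \rceil$ we immediately have $2r \le C/2 + 2$ and $C^2/(8r) \le C/2$, which alone only gives $|A| + 2r \le C + 2$. To recover the missing additive slack, I intend to exploit the floor $\lfloor C^2/(8r) \rfloor$, whose integrality lets us round down aggressively. The cleanest way to formalize this is by a case split on the residue of $\lceil C \rceil$ modulo $4$: in each of the four cases, the value of $\lceil C/4 \rceil$ (and hence $2r$) is pinned down, and the estimate $C^2 \le \lceil C \rceil^{2}$ bounds $\lfloor C^2/(8r) \rfloor$ tightly enough that the sum comes in at or below $\lceil C \rceil$ exactly.

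The remaining loose end is the boundary case $r^{*} > rad(G)$, in which $\lceil C/4 \rceil$ may exceed $rad(G)$ and Lemma~\ref{lem:BBound} does not apply for that choice of $r$. In that regime, I would take $r := rad(G)$, which forces $A$ to consist of a single center vertex (since any two vertices are within distance $2\,rad(G) < 2r^{*} = C/2$ of each other). The algorithm then completes in $1 + 2\,rad(G) < 1 + C/2$ rounds. Since $\delta+1 \le n$ yields $C \ge \sqrt{24} > 4$ whenever the graph is non-trivial, the inequality $1 + C/2 \le \lceil C \rceil$ holds immediately, closing out the boundary case and establishing the claimed bound $\lceil \sqrt{24n/(\delta+1)} \rceil$ in general.
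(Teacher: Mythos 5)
Your proposal is correct and follows the same overall strategy as the paper's proof: bound $|A|+2r$ via Lemma~\ref{lem:BBound}, pick $r$ near the continuous optimum $C/4$, and treat the small-radius regime as a separate case. The one substantive difference is that you are careful about the fact that $r$ must be a positive integer, whereas the paper's proof sets $r = \sqrt{3n/(2(\delta+1))}$ (typically irrational) and invokes Lemma~\ref{lem:BBound} directly, even though that lemma is stated, and the algorithm only makes sense, for $r \in \{1,\ldots,rad(G)\}$. Your choice $r = \lceil C/4 \rceil$, combined with the integrality of $|A|$ and the residue-mod-4 case analysis, does legitimately recover the $\lceil C \rceil$ bound for integer $r$: writing $m=\lceil C \rceil$ and $r = \lceil m/4\rceil$, one checks that $\lfloor m^2/(8r)\rfloor + 2r = m$ in each of the four residue classes (with $C \ge \sqrt{24}$ forcing $m \ge 5$, which eliminates the only degenerate subcase), and the boundary case $\lceil C/4 \rceil > rad(G)$ gives $|A|=1$ and $1 + 2\,rad(G) < 1 + C/2 \le \lceil C \rceil$. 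So your version is a more careful rendering of the paper's own argument that closes a small rounding gap the paper leaves open; you sketched rather than carried out the four-case computation, but the sketch is sound.
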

\begin{proof}
	First, consider the case where $rad(G) < \sqrt{\frac{3n}{2(\delta+1)}}$. Then, by first activating a vertex $v \in G$ with $ecc(v) = rad(G)$, all vertices will be burned within $\sqrt{\frac{3n}{2(\delta+1)}}$ rounds. Next, consider the case where $rad(G) \geq \sqrt{\frac{3n}{2(\delta+1)}}$. Our algorithm produces a burning sequence with length at most $|A| + 2r$. Applying Lemma \ref{lem:BBound} with any $r \in \{1,\ldots,rad(G)\}$, we get that $|A| + 2r \leq \frac{3n}{r(\delta+1)} + 2r$. Notice that the value of $r$ that minimizes $\frac{3n}{r(\delta+1)} + 2r$ is $\sqrt{\frac{3n}{2(\delta+1)}}$: indeed, this can be confirmed by finding the first derivative with respect to $r$, i.e., $-\frac{3n}{r^2(\delta+1)}+2$, setting it equal to 0, and solving for $r$. Since $\sqrt{\frac{3n}{2(\delta+1)}}$ is bounded above by $rad(G)$, setting $r$ to this value implies that $|A| + 2r \leq \frac{3n}{r(\delta+1)} + 2r = \sqrt{\frac{24n}{\delta+1}}$. Combining the two cases, we have shown that our algorithm produces a burning sequence with length at most $\max\left\{\left\lceil \sqrt{\frac{24n}{\delta+1}}\ \right\rceil, \left\lceil\sqrt{\frac{3n}{2(\delta+1)}}\ \right\rceil\right\} = \left\lceil \sqrt{\frac{24n}{\delta+1}}\ \right\rceil$.\qed
\end{proof}

\begin{corollary}
	For any graph $G$ on $n$ vertices with minimum degree $\delta \geq 23$, the burning number is at most $\left\lceil \sqrt{n} \ \right\rceil$.
\end{corollary}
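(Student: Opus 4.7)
The plan is to derive the corollary as a direct consequence of the preceding theorem, which already guarantees a burning sequence of length at most $\left\lceil \sqrt{\frac{24n}{\delta+1}}\ \right\rceil$ for any graph of minimum degree $\delta$. The only thing left to verify is that this expression is bounded above by $\lceil \sqrt{n}\ \rceil$ precisely when $\delta+1 \geq 24$, i.e., when $\delta \geq 23$.

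Concretely, I would first observe that $\sqrt{\frac{24n}{\delta+1}} \leq \sqrt{n}$ is equivalent to $\frac{24}{\delta+1} \leq 1$, which in turn is equivalent to $\delta \geq 23$. Applying the ceiling function (which is monotone) preserves the inequality, so $\left\lceil \sqrt{\frac{24n}{\delta+1}}\ \right\rceil \leq \left\lceil \sqrt{n}\ \right\rceil$ whenever $\delta \geq 23$. Combining this with the bound from the theorem yields $bn(G) \leq \left\lceil \sqrt{n}\ \right\rceil$, as required.

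There is essentially no obstacle here: this is a one-line arithmetic verification on top of the theorem. The only subtlety worth double-checking is the interaction with the ceiling function, but since $x \leq y$ implies $\lceil x \rceil \leq \lceil y \rceil$, no care is needed beyond noting monotonicity.
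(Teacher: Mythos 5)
Your proof is correct and follows exactly the intended route: the paper gives no explicit proof for this corollary precisely because it is the one-line substitution you describe, namely that $\delta \geq 23$ gives $\delta + 1 \geq 24$, hence $\frac{24n}{\delta+1} \leq n$, and monotonicity of $\sqrt{\cdot}$ and $\lceil\cdot\rceil$ finishes it. Nothing is missing.
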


\section{Graphs of Small Path-length or Tree-length}\label{sect:sparse}
In this section, we provide efficient algorithms for burning graphs of small path-length or tree-length. 
In Section~\ref{sect:pl:prelim}, we provide necessary definitions and preliminary results. In Sections~\ref{sect:pl:path} and~\ref{sect:pl:tree}, we provide algorithms for burning graphs of bounded path-length and tree-length, respectively. Our algorithms achieve good approximation ratios when the diameter of the input graph is asymptotically large. We note that the burning problem is more interesting when the graphs has large diameter; otherwise, we can use the following easy theorem that shows the problem can be optimally solved 
for graphs of small diameter.

\begin{theorem}\label{th:smallDiam}
The burning problem can be optimally solved in polynomial time if the diameter of the input graph is bounded by a constant.
\end{theorem}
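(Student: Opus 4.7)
The plan is to reduce the problem to a brute-force enumeration by first showing that when the diameter is bounded by a constant, the burning number itself is bounded by a constant, so the space of candidate burning schedules is polynomial in $n$.

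First, I would observe that for any connected graph $G$, $bn(G) \leq rad(G) + 1 \leq d + 1$. Indeed, pick any vertex $v$ with $ecc(v) = rad(G)$; the schedule that activates $v$ in round $1$ and then activates arbitrary vertices in subsequent rounds completes burning in $ecc(v) + 1$ rounds, because after $k$ rounds the fire started at $v$ has spread to every vertex within distance $k-1$ of $v$. So once $k - 1 \geq ecc(v)$, every vertex is burned. Hence if $d$ is bounded by a constant $c_0$, then $bn(G) \leq c_0 + 1$ is also bounded by a constant; call this constant $c$.

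Next, I would note that any burning schedule is a sequence of at most $bn(G) \leq c$ vertices, so the total number of candidate schedules is at most $\sum_{k=1}^{c} n^k = O(n^c)$, which is polynomial in $n$. The algorithm simply enumerates all such sequences in order of nondecreasing length and returns the first one that burns $G$. Since $bn(G) \leq c$, at least one such sequence succeeds, and the shortest successful one has length exactly $bn(G)$.

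It remains to verify each candidate schedule efficiently. I would precompute all-pairs shortest path distances in $G$ in polynomial time (e.g., by running BFS from every vertex). Then a candidate schedule $(v_1, \ldots, v_k)$ burns $G$ if and only if every vertex $u$ satisfies $d(u, v_i) \leq k - i$ for some $i \in \{1, \ldots, k\}$, which can be checked in $O(nk)$ time using the precomputed distances. Combining this with the $O(n^c)$ bound on the number of candidates yields a total running time polynomial in $n$, completing the proof.

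The argument is essentially bookkeeping; the only non-obvious ingredient is the inequality $bn(G) \leq rad(G) + 1$, which is where I would expect a careful reader to want an explicit justification, but this follows directly from the definition of the burning process as sketched above. \qed
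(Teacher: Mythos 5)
Your proof is correct and follows essentially the same brute-force strategy as the paper: bound $bn(G)$ by a constant whenever the diameter is a constant, enumerate the $O(n^c)$ candidate activator sequences, and verify each in polynomial time via shortest-path distances. In fact you are slightly more careful than the paper on the one non-trivial step: you correctly justify $bn(G) \leq rad(G)+1 \leq d+1$, whereas the paper simply asserts $bn(G) \leq d$, which is false in general (e.g.\ $K_{3,3}$ has diameter $2$ but burning number $3$); this slip is harmless for the theorem since $d+1$ is still a constant, but your version is the one that actually holds.
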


\begin{proof}
Assume that the diameter an input graph $G$ is a constant value $c$. We know the burning number of $G$ is at most $c$. Any burning schedule can be described by a sequence of activators $\langle a_1, a_2, \ldots, a_{\gamma} \rangle$ for some positive integer $\gamma \leq c$ which defines the cost of the schedule. Given a fixed value of $\gamma$, there are at most 
$n^{\gamma} \leq n^c$ possible ways to select the $\gamma$ activators. 
So, there are less than $c \times n^c$ candidate burning schedules. For each candidate, we can check whether all vertices are burned in $\gamma$ rounds using a breadth-first approach, in $O(n^2)$ time. So, it takes $O(c \times n^{c+2})$ to find an optimal burning schedule, that is, a candidate solution which burns all vertices and has smallest burning time (the smallest value of $\gamma$).  \qed
\end{proof}

In light of Theorem~\ref{th:smallDiam}, for the remainder of this section, we focus on graphs with asymptotically large diameter.

\subsection{Preliminaries} \label{sect:pl:prelim}
The concepts of \emph{path decomposition} and \emph{tree decomposition} \cite{Treewidth1,Treewidth2}, 
were initially intended to measure, via the path-width and tree-width parameters, how close a graph is to a path and a tree, respectively. Path-length and tree-length are related parameters that are also based on the same definition of path decomposition.

\begin{definition}
	[Decompositions, tree-length, path-length]
	\ 
	\begin{itemize}
\item A \emph{tree decomposition} \Tau of a graph $G$ is a tree whose vertex set is a finite set of \emph{bags} $\{B_i\ |\ 1 \leq i \leq \xi \in \mathbb{N}\}$, where: 
each bag is a subset of the vertices of $G$; for every edge $\{v,w\}$, at least one bag contains both $v$ and $w$; and, for every vertex $v$ of $G$, the set of bags containing $v$ forms a connected subtree of \Tau. When \Tau is a path, then the decomposition is called a \emph{path decomposition} of $G$. 
\item A \emph{rooted tree decomposition} is a tree decomposition with a designated root bag, and parent/child relationships between bags are defined in the usual way. For any bag $B$ in a rooted decomposition \Tau, we denote by \subTau{B} the subtree of the decomposition rooted at $B$.
\item The \emph{length} of a decomposition is the maximum distance between two vertices in the same bag, i.e.,
$\max_{1\leq i \leq \xi} \{d(x,y)\ |\ x,y \in B_i\}$.  
The \emph{tree-length of $G$}, denoted by $tl$, is defined to be the minimum length taken over all tree decompositions of $G$. The \emph{path-length of $G$}, denoted by $pl$, is defined to be the minimum length taken over all path decompositions of $G$. 
\end{itemize}
\end{definition}

Figure~\ref{fig:treeDecompos} illustrates the concepts of path-length and tree-length. Throughout the paper, we refer to vertices of \Tau as bags to distinguish them from vertices of $G$. When discussing graphs of small path/tree-length, we implicitly assume the input graph is connected (otherwise, its path/tree-length would be unbounded). 
For any graph $G$, the path-length of $G$ cannot be smaller than its tree-length, that is, the family of graphs with bounded tree-length includes graphs with bounded path-length as a sub-family.
It is known that a graph has path-length 1 if and only if it is a connected interval graph~\cite{gilmoreHoffman1964}, and tree-length 1 if and only if it is a connected chordal graph~\cite{GAVRIL74}. As such, it is possible to recognize and compute the path/tree decomposition of these graphs in linear time:  we can use the linear-time algorithm of Booth and Lueke~\cite{BoothL76} for interval graphs and a lexicographic breadth first search~\cite{RoseTL76} for chordal graphs. Unfortunately however, we cannot extend these algorithms to larger values of path-length or tree-length. In particular, it is known that the problem of determining whether a given graph has tree-length at most $k$ is NP-hard for any $k\geq 2$~\cite{Lokshtanov2010}. On the positive side, there are algorithms with approximation factor 2 for computing path-length~\cite{phdthesisTreeLength}, and approximation factor 3 for computing tree-length\footnote{In contrast, it is pretty hard to approximate the tree-width; see, e.g.,~\cite{CoudertDN16} for details.}~\cite{DourisboureG07}. 
Given these results, it is safe to assume a path/tree decomposition of a given graph is provided together with the graph (otherwise, we use these algorithms to achieve decompositions which are a constant factor away from the optimal decomposition).

%
%

\begin{figure}
\begin{center}
        \includegraphics[scale=.5, trim= 0mm 130mm 20mm 0mm , clip]{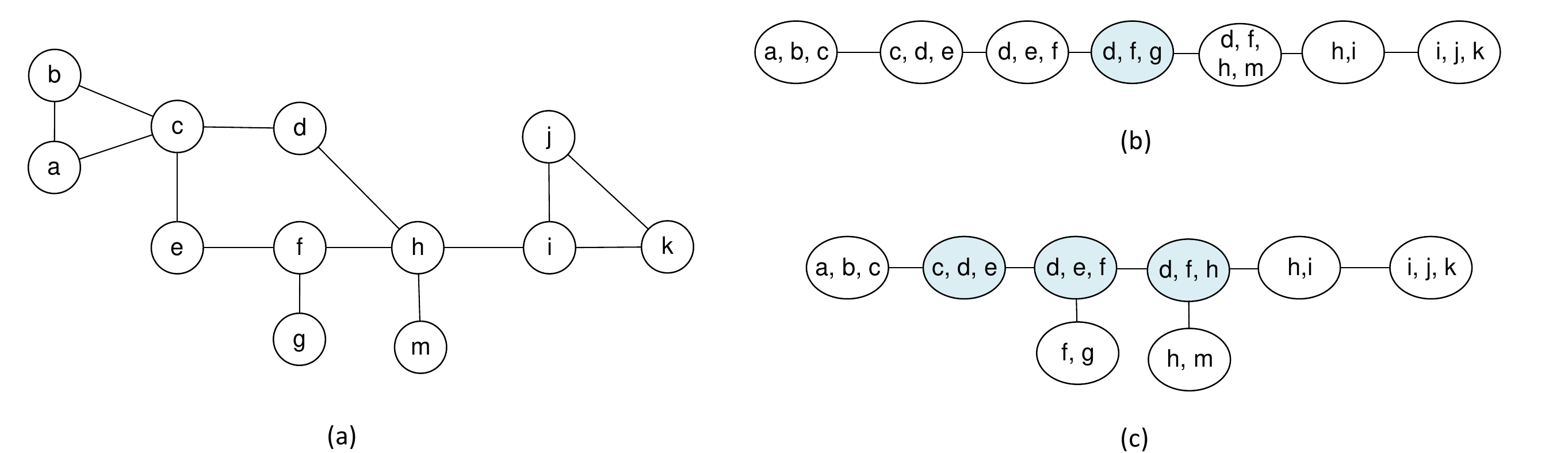}
\end{center}
    \caption{(a) A graph $G$  (b) A path decomposition of $G$ with path-length 3. Note that the distance between $d$ and $g$ in the highlighted bag is 3. (c) A tree decomposition of $G$ with tree-length 2. The highlighted bags contain pair of vertices with distance 2.} 
    \label{fig:treeDecompos}

\end{figure}


The following results establish useful structural properties about tree decompositions that will be used in the remainder of the paper. The first observation is that, in any rooted decomposition \Tau, if there is a bag $B$ such that the set of all vertices in bags of \subTau{B} is a subset of $B$'s parent bag, then we can remove the entire subtree rooted at $B$ from \Tau without changing the length of the decomposition. After removing all such subtrees, we call the remaining decomposition ``trimmed''.
\begin{definition}
	For any connected graph $G$, a rooted tree decomposition \Tau of $G$ is called \emph{trimmed} if, for each bag $B$ of \Tau other than the root, there exists a vertex $v \in G$ that is contained in a bag of \subTau{B} and not contained in any bag of \Tau $\setminus$ \subTau{B}.
\end{definition}
\begin{observation}\label{obs:nonTrivial}
	For any connected graph $G$ and any rooted tree decomposition \scriptsize{$\mathcal{T}'$}\normalsize\xspace, there exists a trimmed rooted tree decomposition \Tau of $G$ with the same length as \scriptsize{$\mathcal{T}'$}\normalsize\xspace.
\end{observation}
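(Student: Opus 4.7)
The plan is to describe an iterative trimming procedure and verify that it terminates with a decomposition meeting the required properties. Starting from \scriptsize{$\mathcal{T}'$}\normalsize\xspace, while the current decomposition \Tau is not trimmed I would pick any non-root bag $B$, with parent bag $B_p$, such that every vertex appearing in a bag of \subTau{B} also appears in some bag of \Tau $\setminus$ \subTau{B}, and then remove the entire subtree \subTau{B} from \Tau. The structural claim that makes this step work is that, in such a situation, every vertex $v$ appearing in any bag of \subTau{B} must in fact lie in $B_p$: the set of bags of \Tau that contain $v$ forms a connected subtree with at least one bag inside \subTau{B} and at least one bag outside \subTau{B}, so this subtree must cross the edge from $B$ to $B_p$, forcing $v \in B \cap B_p$.

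With this claim in hand, I would verify that after one removal the result is still a rooted tree decomposition of $G$ whose length has not increased. The length does not change, since no new bag is introduced and hence no new pair of vertices shares a bag. Every edge of $G$ remains covered: any edge previously covered only inside \subTau{B} has both endpoints appearing in \subTau{B}, hence both endpoints in $B_p$ by the claim, so the edge is still covered by $B_p$. For the connectivity axiom, I would use the fact that \Tau $\setminus$ \subTau{B} is itself a subtree of \Tau and that the intersection of two subtrees of a tree is again a subtree, applied to the set of bags containing $v$ and to \Tau $\setminus$ \subTau{B}; this shows that for each vertex $v$ the bags containing $v$ in the reduced decomposition still form a connected subtree.

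Finally, because each removal strictly reduces the number of bags and \scriptsize{$\mathcal{T}'$}\normalsize\xspace has finitely many bags, the procedure must terminate, and its stopping condition is precisely that the resulting decomposition is trimmed. The invariants established above then give the desired trimmed decomposition \Tau of $G$ with length at most that of \scriptsize{$\mathcal{T}'$}\normalsize\xspace. The only step that requires genuine care is the verification of the connectivity axiom after removal; the rest amounts to noting that throwing bags away cannot strengthen the axioms' hypotheses.
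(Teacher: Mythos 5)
Your proof is correct and takes the same approach the paper sketches informally in the prose preceding the definition: repeatedly delete a subtree $\mathcal{T}_B$ whose vertices are all already represented elsewhere, using the structural claim that such vertices must lie in $B$'s parent (which also handles edge coverage and the connectivity axiom, via the intersection-of-subtrees argument). One small point to tighten: you justify ``length does not change'' only by noting that no new bags appear, which shows the length cannot increase; to get equality (as the statement requires) you should also invoke your structural claim to observe that every removed bag $B'\in\mathcal{T}_B$ satisfies $B'\subseteq B_p$, and since $B_p$ survives, the maximum within-bag distance cannot decrease either.
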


In the remainder of the paper, we assume that all rooted tree decompositions are trimmed.
\begin{lemma}\label{lem:treeLength}
	Consider any rooted tree decomposition \Tau $= \{B_i\ |\ 1 \leq i \leq \xi \in \mathbb{N}\}$ of a graph $G$, and denote the root bag by $B_r$. For any bag $B_j \neq B_r$, define $G_j$ to be the graph obtained by removing from $G$ all vertices contained in bag $B_j$. The vertices in bags of \subTau{B_j} and the vertices in bags of \Tau $\setminus$ \subTau{B_j} are in different components of $G_j$, i.e., $G_j$ is disconnected.
\end{lemma}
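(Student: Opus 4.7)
The plan is to partition the remaining vertices of $G_j$ into the ``inside'' and ``outside'' of $B_j$ according to the decomposition, and then use the two defining properties of tree decompositions to show that no edge of $G$ crosses this partition. I would define $V_{in}$ to be the set of vertices of $G$ that appear in some bag of \subTau{B_j} but do not belong to $B_j$, and $V_{out}$ analogously for \Tau $\setminus$ \subTau{B_j}. Every vertex of $G$ lies in at least one bag of \Tau, so every vertex of $G_j$ lies in $V_{in} \cup V_{out}$, and to prove the lemma it suffices to show that these two sets are disjoint and that $G$ has no edge with one endpoint in each.

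The key lever throughout will be the connected-subtree property: for any vertex $v$ of $G$, the bags of \Tau that contain $v$ induce a connected subtree. Since $B_j$ is the unique bag on the path in \Tau between any bag of \subTau{B_j} and any bag of \Tau $\setminus$ \subTau{B_j}, any vertex that lies in bags on both sides of $B_j$ must also lie in $B_j$ itself. I would first apply this to show $V_{in} \cap V_{out} = \emptyset$: a vertex in the intersection would witness bags on both sides of $B_j$, which forces it into $B_j$ and contradicts the defining conditions of $V_{in}$ and $V_{out}$.

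Next, suppose toward a contradiction that $\{u,v\}$ is an edge of $G$ with $u \in V_{in}$ and $v \in V_{out}$. The edge-covering property of the decomposition gives some bag $B$ containing both $u$ and $v$, and $B$ lies either in \subTau{B_j} or in \Tau $\setminus$ \subTau{B_j}. In the first case, $v$ appears both in $B$ (a bag of \subTau{B_j}) and in the bag outside \subTau{B_j} witnessing $v \in V_{out}$, and the same connected-subtree argument then forces $v \in B_j$, contradicting $v \in V_{out}$. The second case is symmetric and pushes $u$ into $B_j$. Hence no such cross edge exists, so $V_{in}$ and $V_{out}$ sit in disjoint unions of components of $G_j$, which is exactly the separation claim of the lemma.

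I do not anticipate a real obstacle; the whole argument reduces to two invocations of the same ``any bag-path between the two sides of the decomposition must pass through $B_j$'' observation. The only point that requires care is the bookkeeping: a vertex classified into $V_{in}$ (respectively $V_{out}$) is by definition still guaranteed to appear in some bag on its own declared side, and this is precisely what feeds the connected-subtree argument when the witness bag $B$ of a hypothetical cross edge happens to lie on the opposite side.
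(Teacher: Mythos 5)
Your proof is correct and takes essentially the same approach as the paper's: both rely on the connected-subtree property (to force any vertex appearing in bags on both sides of $B_j$ into $B_j$ itself) together with the edge-covering property (to rule out any cross edge between the two sides once $B_j$ is removed). The paper phrases it as a proof by contradiction and invokes the standing trimmed-decomposition assumption when asserting a cross edge exists, but the core argument it runs is identical to yours.
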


\begin{proof}
	Consider any $j \in \{1,\ldots,\xi\} \setminus \{r\}$, and define $G_{j}$ as the graph obtained by removing from $G$ all vertices contained in bag $B_j$. To obtain a contradiction, assume the vertices in bags of \subTau{B_j} and the vertices in bags of \Tau $\setminus$ \subTau{B_j} are in the same component of $G_j$.
	Since \Tau is trimmed, there exists an edge $e=\{x,y\}$ in $G_j$ (and in $G$) such that $x$ is contained in the subgraph induced by the vertices in bags of \subTau{B_j}, and $y$ is contained in the subgraph induced by the vertices in bags \Tau $\setminus$ \subTau{B_j}. As vertices $x$ and $y$ are in $G_j$, they are not contained in bag $B_j$ of \Tau. Since \Tau is a valid path decomposition, the set of bags containing $x$ must form a connected subpath in \Tau, and the set of bags containing $y$ must form a connected subpath in \Tau. It follows that $x$ cannot appear in any bag of \Tau $\setminus$ \subTau{B_j} and $y$ cannot appear in any bag of \subTau{B_j}. So, $x$ and $y$ do not appear together in any bag of \Tau, despite being adjacent in $G$, which contradicts the fact that \Tau is a valid path decomposition of $G$. \qed
\end{proof}

We immediately get the following result concerning path decompositions by choosing either leaf bag as the root and applying Lemma \ref{lem:treeLength}.
\begin{corollary}\label{cor:pathLength}
	Consider any path decomposition \Tau $= \{B_i\ |\ 1 \leq i \leq \xi \in \mathbb{N}\}$ of a graph $G$ with bags indexed in order from one leaf node to the other. For any $j \in \{2,\ldots,\xi-1\}$, define $G_j$ to be the graph obtained by removing from $G$ all vertices contained in bag $B_j$. The vertices in bags $B_1,\ldots,B_{j-1}$ and the vertices in bags of $B_{j+1},\ldots,B_\xi$ are in different components of $G_j$, i.e., $G_j$ is disconnected.
\end{corollary}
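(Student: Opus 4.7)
The plan is to treat the given path decomposition as a special case of a rooted tree decomposition by designating one of its two leaf bags as the root, so that Lemma~\ref{lem:treeLength} can be invoked directly.

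First, I would root \Tau at the leaf bag $B_1$. Since the underlying tree is a path with bags indexed in order from one leaf to the other, the parent of $B_i$ is $B_{i-1}$ for each $i \geq 2$, and the subtree \subTau{B_j} rooted at any bag $B_j$ with $j \geq 2$ is precisely the tail $\{B_j, B_{j+1}, \ldots, B_\xi\}$, while \Tau $\setminus$ \subTau{B_j} is the prefix $\{B_1, \ldots, B_{j-1}\}$. By the standing convention adopted after Observation~\ref{obs:nonTrivial}, I may assume this rooted decomposition is trimmed; if the input path decomposition is not already trimmed, Observation~\ref{obs:nonTrivial} provides a trimmed decomposition of the same length which is still a rooted path (possibly with fewer bags, re-indexed), so no generality is lost.

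Second, I would apply Lemma~\ref{lem:treeLength} to the bag $B_j$ for an arbitrary fixed $j \in \{2,\ldots,\xi-1\}$. The lemma immediately yields that the graph $G_j$ obtained by deleting all vertices of $B_j$ from $G$ is disconnected, with the vertices appearing in bags of \subTau{B_j} lying in a different component from the vertices appearing in bags of \Tau $\setminus$ \subTau{B_j}. Re-expressing this conclusion in the notation of the corollary, these two vertex sets are exactly those appearing in $B_{j+1}, \ldots, B_\xi$ and in $B_1, \ldots, B_{j-1}$, respectively, which is precisely the statement to be proved.

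Since the corollary is essentially a direct specialization of Lemma~\ref{lem:treeLength} to the case where the decomposition tree is a path, I do not anticipate any genuine obstacle. The only subtlety is verifying that the trimming hypothesis required by Lemma~\ref{lem:treeLength} is in force for our rooted path decomposition, and this is handled automatically by Observation~\ref{obs:nonTrivial}.
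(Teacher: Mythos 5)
Your proof matches the paper's own argument exactly: the paper derives the corollary in one line by rooting the path decomposition at a leaf bag and invoking Lemma~\ref{lem:treeLength}, which is precisely what you do. Your extra care about the trimming hypothesis is reasonable and consistent with the paper's blanket convention, adopted just after Observation~\ref{obs:nonTrivial}, that all rooted tree decompositions are assumed trimmed.
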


\begin{lemma}\label{lem:crucialBags}
Consider any rooted tree decomposition or any path decomposition \Tau $= \{B_i\ |\ 1 \leq i \leq \xi \in N\}$ of a graph $G$. Let $u$ and $v$ be any vertices in $G$, let $B_u$ be any bag of \Tau that contains $u$, and let $B_v$ be any bag of \Tau that contains $v$. If $P$ is a shortest path between $u$ and $v$ in $G$, then each bag in the shortest path between $B_u$ and $B_v$ in \Tau contains a vertex of $P$.
\end{lemma}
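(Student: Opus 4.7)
The plan is to proceed by contradiction, using Lemma~\ref{lem:treeLength} as the main tool (with Corollary~\ref{cor:pathLength} playing the analogous role for path decompositions). Suppose some bag $B_j$ on the shortest path between $B_u$ and $B_v$ in \Tau contains no vertex of $P$. Since $u, v \in P$, this forces $u, v \notin B_j$, and in particular $B_j \neq B_u$ and $B_j \neq B_v$, so $B_j$ must lie strictly between $B_u$ and $B_v$ on this path.

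Next, I would re-root \Tau at $B_u$. Since $B_j$ lies on the unique path from $B_u$ to $B_v$ in \Tau and $B_j \neq B_u$, the bag $B_j$ is a proper ancestor of $B_v$; hence $B_v \in $ \subTau{B_j} and $v$ appears in some bag of \subTau{B_j}. By the connected-subtree property of tree decompositions, the set of bags containing $v$ forms a connected subtree of \Tau, and since this subtree meets \subTau{B_j} but avoids $B_j$ itself, it must lie entirely within \subTau{B_j}. Symmetrically, $u$ appears in $B_u \in$ \Tau $\setminus$ \subTau{B_j} and $u \notin B_j$, so every bag containing $u$ lies in \Tau $\setminus$ \subTau{B_j}. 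Now apply Lemma~\ref{lem:treeLength} to the bag $B_j$: the graph $G_j$ obtained by deleting the vertices of $B_j$ from $G$ is disconnected in such a way that the vertices appearing in bags of \subTau{B_j} are separated from those appearing in bags of \Tau $\setminus$ \subTau{B_j}. Therefore $u$ and $v$ lie in different connected components of $G_j$. But by assumption $P$ uses no vertex of $B_j$, so $P$ would be a $u$-to-$v$ walk lying entirely in $G_j$, a contradiction.

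For the path decomposition case, the argument is essentially identical except that Corollary~\ref{cor:pathLength} is invoked in place of Lemma~\ref{lem:treeLength} (using the linear ordering of bags rather than a rooting of the tree). I expect the main obstacle to be the bookkeeping about which side of $B_j$ each of $u$ and $v$ lies on; this step relies crucially on the connected-subtree property, which guarantees that a vertex not contained in $B_j$ appears exclusively in bags on one side of the cut induced by removing $B_j$ from \Tau, and on the observation that the rooting can be chosen to place $B_u$ outside \subTau{B_j} and $B_v$ inside it.
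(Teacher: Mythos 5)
Your proof is correct and follows the same overall strategy as the paper's (contradiction via the separator property of the decomposition, applying Lemma~\ref{lem:treeLength}, resp.\ Corollary~\ref{cor:pathLength}, to the offending bag $B_j$). Where you differ is in the amount of care taken at one step: the paper simply asserts ``$G_j$ is disconnected and $u$ and $v$ are located in different components of $G_j$,'' but with the decomposition's original root $B_r$ this does not follow immediately from Lemma~\ref{lem:treeLength} when $B_j$ happens to be the least common ancestor of $B_u$ and $B_v$, because then both $B_u$ and $B_v$ lie in \subTau{B_j} and the lemma's subtree/complement split does not separate them. Your re-rooting at $B_u$ neatly collapses all cases into one: $B_j$ becomes a proper ancestor of $B_v$ and not of the root $B_u$, and the connected-subtree property then forces all bags containing $v$ into \subTau{B_j} and all bags containing $u$ outside it, after which Lemma~\ref{lem:treeLength} applies directly. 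This is a genuine tightening of the argument, and the path-decomposition case via Corollary~\ref{cor:pathLength} is handled correctly in the same spirit.

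One small technical caveat worth being explicit about: the paper's standing convention is that rooted tree decompositions are \emph{trimmed}, and Lemma~\ref{lem:treeLength} is proved under that convention, yet after re-rooting at $B_u$ the decomposition need not remain trimmed. This does not break your argument, because the only role trimming plays in Lemma~\ref{lem:treeLength} is to guarantee that the ``subtree side'' of $G_j$ is non-empty, and in your setting non-emptiness is supplied for free by $u$ and $v$ themselves (neither lies in $B_j$, and they sit on opposite sides of the split). Still, if you rely on Lemma~\ref{lem:treeLength} verbatim you should say why the trimming hypothesis is dispensable here, or alternatively appeal directly to the standard separator property of tree decompositions, which needs no trimming.
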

\begin{proof}
To obtain a contradiction, assume that there exists a bag $B_j$ in the shortest path between $B_u$ and $B_v$ in \Tau that does not contain any vertex of $P$. Since $u \in B_u$ and $v \in B_v$, it follows that $B_j \notin \{B_u, B_v\}$. Since $B_j$ does not contain any vertex from $P$, the path $P$ remains unchanged in the subgraph $G_j$ of $G$ formed by removing all vertices contained in $B_j$. 
By Lemma~\ref{lem:treeLength} (if \Tau is a rooted tree decomposition) and Corollary \ref{cor:pathLength} (if \Tau is a path decomposition), $G_j$ is disconnected and $u$ and $v$ are located in different components of $G_j$. This means $P$ is a path between $x$ and $y$ that belong to different components of $G_j$, which is a contradiction.	\qed
\end{proof}

\subsection{Burning graphs of small path-length}\label{sect:pl:path}
The following theorem shows that a graph of bounded path-length can be burned in a nearly-optimal number of rounds.

\begin{theorem}\label{th:pl}
Given a graph $G$ of diameter $d \geq 1$ and a path decomposition of $G$ with path-length $pl$, it is possible to burn $G$ in $\lceil \sqrt{d-1}\ \rceil + pl$ rounds.
\end{theorem}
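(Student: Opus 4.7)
The plan is to reduce the problem to classical path burning: I exhibit a ``spine'' path $Q$ in $G$ such that every vertex of $G$ lies within graph-distance $pl$ of $Q$, burn $Q$ with $k=\lceil\sqrt{d-1}\rceil$ activators during rounds $1,\ldots,k$, and then use the remaining $pl$ rounds for the fire to spread within each bag outward from $Q$.

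First I pick $u\in B_1$ and $v\in B_\xi$ and let $Q=(y_0=u,y_1,\ldots,y_D=v)$ be a shortest $u$-to-$v$ path in $G$, of length $D=d(u,v)\leq d$. Applying Lemma~\ref{lem:crucialBags} to $u$, $v$, and the given path decomposition, each of the bags $B_1,B_2,\ldots,B_\xi$ contains a vertex of $Q$; consequently every vertex $x$ of $G$ lies in a bag together with some $y\in Q$ and therefore satisfies $d(x,y)\leq pl$.

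Next, I apply the standard path-burning schedule to $Q$: for each $j$ with $1\leq j\leq k$, activate a vertex $p_j$ at a position on $Q$ so that the intervals $[p_j-(k-j),p_j+(k-j)]$, of combined length $k^2$, tile $Q$. This succeeds whenever $|Q|\leq k^2$; since $k^2\geq d-1$, it is enough to arrange that $D\leq d-2$. I would argue that $u\in B_1, v\in B_\xi$ can always be chosen with $d(u,v)\leq d-2$, using the trimmed path-decomposition structure together with a diameter-realizing path: roughly, since the leaf bags of a trimmed path decomposition of a connected graph contain at least two vertices, one can start from a diameter-realizing path and, using the connected-subpath property of the decomposition, ``step one vertex inward'' at each end of this path while remaining inside the leaf bags, obtaining a pair at graph distance at most $d-2$.

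By the end of round $k$ every vertex of $Q$ is on fire, and during the subsequent $pl$ rounds the fire spreads within every bag by an additional distance of $pl$, so by round $k+pl$ every vertex of $G$ (being within $pl$ of $Q$) is burning. The total burning time is $k+pl=\lceil\sqrt{d-1}\rceil+pl$, as claimed. The main obstacle I expect is the rigorous justification of the structural bound $D\leq d-2$: this requires a delicate argument about the interaction between leaf bags and diameter-realizing paths. If the claim fails in some pathological configuration, a backup is to dedicate the first two activators to $u$ and $v$ themselves, so that their direct radii $k+pl-1$ and $k+pl-2$ (both at least $pl$ when $k\geq 2$) burn the two leaf bags entirely, and to cover only the interior of $Q$ with the remaining $k-2$ activators using effective radii.
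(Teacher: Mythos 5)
Your high-level plan is exactly the paper's: exhibit a spine path in $G$ with one endpoint in each leaf bag, apply Lemma~\ref{lem:crucialBags} to conclude every bag meets the spine, burn the spine in $\lceil\sqrt{d-1}\rceil$ rounds, and let the fire diffuse $pl$ more steps. The gap is the single step you flag yourself: you never actually establish that you can find $u\in B_1$ and $v\in B_\xi$ with $d(u,v)\leq d-2$. The ``step one vertex inward'' sketch is the right intuition, but as written it is unproven, and crucially, your backup plan does \emph{not} close the gap. Dedicating the first two activators to $u$ and $v$ and letting them run for $k+pl$ rounds does burn the leaf bags, but the constraint one actually needs is that every vertex of $Q$ be burned \emph{by round $k$}, so that the $pl$ subsequent rounds can reach vertices at distance $pl$ from $Q$. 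Under that constraint, $u$ covers only the first $k$ vertices of $Q$, $v$ covers the last $k-1$, and the remaining $k-2$ activators cover at most $(k-2)^2$ vertices, for a total of $k^2-2k+3$. If $D$ can be as large as $d$, then with $k=\lceil\sqrt{d-1}\rceil\approx\sqrt d$ this falls short by roughly $2\sqrt d$. So without $D\leq d-2$ the construction fails, and the backup does not repair it.

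The paper proves the missing bound cleanly. Assume (after trimming) that $B_1\not\subseteq B_2$ and $B_\xi\not\subseteq B_{\xi-1}$, and pick $x\in B_1\setminus B_2$ and $y\in B_\xi\setminus B_{\xi-1}$. By the connected-subpath property, $x$ appears \emph{only} in $B_1$ and $y$ only in $B_\xi$; hence every neighbour of $x$ lies in $B_1$ and every neighbour of $y$ lies in $B_\xi$. Take a shortest $x$-$y$ path, let $x'$ be its second vertex and $y'$ its second-to-last vertex (these exist and satisfy $d(x',y')=d(x,y)-2\leq d-2$ because $x$ and $y$ cannot be adjacent when $\xi\geq 2$, as they share no bag). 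Then $x'\in B_1$, $y'\in B_\xi$, and $S$, the shortest $x'$-$y'$ path, is the desired spine. Note the key difference from your sketch: the paper does \emph{not} start from a diameter-realizing path and try to pull its endpoints into the leaf bags; it starts inside the leaf bags and steps inward along a shortest path between the two chosen leaf-bag vertices, which automatically keeps the new endpoints in $B_1$ and $B_\xi$. This is exactly what makes the length bound come out right. The paper also handles the $\xi=1$ case separately (the whole graph has diameter $\leq pl$, so $pl$ rounds suffice), which your sketch does not mention. I'd recommend rewriting your spine construction along those lines rather than the diameter-path/backup route.
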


\begin{proof}
Consider a path decomposition \Tau $= \{B_i\ |\ 1 \leq i \leq \xi \in N\}$ of $G$ such that the bags are indexed in increasing order from one leaf to the root. Further, assume that \Tau has the following form: the first bag $B_1$ contains a vertex $x$ that is absent in $B_2$, and the last bag $B_\xi$ contains a vertex $y$ that is absent in bag $B_{\xi-1}$. For any path decomposition of $G$ that is not of this form, it must be the case that at least one of $B_1 \subseteq B_2$ or $B_{\xi} \subseteq B_{\xi-1}$ holds. If $B_1 \subseteq B_2$, we can remove $B_1$ to get another path decomposition of $G$, and if $B_{\xi} \subseteq B_{\xi-1}$, we can remove $B_\xi$ to get another path decomposition of $G$. If \Tau consists of one bag $B_1$, then the diameter of $G$ is $pl$, so $G$ can be burned within $pl$ rounds by choosing any vertices of $G$ as activators. So we proceed under the assumption that $\xi \geq 2$.

Since \Tau is a valid path decomposition, each neighbour $x'$ of $x$ must appear together with $x$ in at least one bag, and this must be $B_1$: as we assumed that $x$ is in $B_1$ and not $B_2$, we know that $x$ does not appear in any bag $B_i$ with $i \geq 2$ as the bags containing $x$ must form a connected subgraph of \Tau. Similarly, each neighbour $y'$ of $y$ must appear together with $y$ in $B_{\xi}$. Thus, the shortest path between $x$ and $y$ in $G$ starts with an edge $\{x,x'\}$ such that $x'\in B_1$ and ends with an edge $\{y',y\}$ such that $y' \in B_{\xi}$.  
Let $S$ denote the shortest path between $x'$ and $y'$ in $G$; note that $S$ has length at most $d-2$. It is known that any path of length $m$ can be burned in $\lceil \sqrt{m+1} \rceil$ rounds~\cite{5BonatoJR16}. So, we can devise a burning schedule that burns all vertices of $S$ in at most $\lceil \sqrt{d-1} \rceil$ rounds.  
By Lemma~\ref{lem:crucialBags}, each bag in $\{B_2, B_3, \ldots, B_{\xi-1}\}$ contains at least one vertex of $S$. Recall that $x' \in S$ is in $B_1$ and $y' \in S$ is in $B_{\xi}$. So, within $\lceil \sqrt{d-1} \rceil$ rounds that it takes to burn all vertices of $S$, at least one vertex in each bag of the path decomposition is burned. In the $pl$ rounds that follow, all vertices will be burned because the distance between any two vertices in each bag is at most $pl$ (from the definition of path-length). \qed  
\end{proof}

The study of path-length is relatively new, and its relationship with other graph families is not fully discovered yet. Regardless, we can still use Theorem~\ref{th:pl} to state the following two corollaries about grids and interval graphs.

\begin{corollary}
Consider a grid graph $G$ of size $n = n_1 \times n_2$ and $n_1 \leq n_2$. It is possible to burn $G$ in $\sqrt{n} + o(\sqrt{n})$ rounds.
\end{corollary}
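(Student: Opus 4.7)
The plan is to combine Theorem~\ref{th:pl} with a natural column-pair path decomposition of the grid. Index the vertices of $G$ as $(r,c)$ for $1 \leq r \leq n_1$ and $1 \leq c \leq n_2$. For each $i \in \{1,\ldots,n_2-1\}$, define the bag $B_i = \{(r,i), (r,i+1) \WEHAVE 1 \leq r \leq n_1\}$. Arranged in the order $B_1, B_2, \ldots, B_{n_2-1}$, these bags form a valid path decomposition: each vertex $(r,c)$ appears only in the consecutive bags $B_{c-1}$ and $B_c$ (or just one of them at the boundary), every horizontal edge $\{(r,i),(r,i+1)\}$ lies in $B_i$, and every vertical edge $\{(r,i),(r+1,i)\}$ lies in $B_{i-1}$ or $B_i$.

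Next I would bound the path-length of this decomposition. Each bag spans two adjacent columns, and the largest distance in $G$ between two vertices of the same bag is attained by opposite corners such as $(1,i)$ and $(n_1,i+1)$, whose grid-distance is $(n_1-1)+1 = n_1$. Hence $pl \leq n_1$. Combining this with the well-known fact that the diameter of the grid is $d = n_1+n_2-2$, Theorem~\ref{th:pl} immediately gives a burning schedule of length at most
\[
\lceil \sqrt{d-1}\,\rceil + pl \leq \lceil \sqrt{n_1 + n_2 - 3}\,\rceil + n_1.
\]

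Finally, I would show that this bound is $\sqrt{n}+o(\sqrt{n})$. Because $n_1 \leq n_2$ and $n_1 n_2 = n$, we have $n_1 \leq \sqrt{n}$ and $n_1 + n_2 \leq 2n_2 = 2n/n_1$, so the number of rounds is at most $\sqrt{2n/n_1} + n_1$ (up to an additive constant from the ceiling). Viewed as a function of $n_1 \in [1,\sqrt{n}]$, this expression has an interior minimum near $n_1 = \Theta(n^{1/3})$ and is maximized at the endpoints, where it evaluates to $\sqrt{2n}+1$ at $n_1=1$ (but here the finer bound $\sqrt{n_1+n_2-3}+n_1 = \sqrt{n-2}+1$ applies) and to $\sqrt{2\sqrt{n}}+\sqrt{n}$ at $n_1=\sqrt{n}$. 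Both endpoint values are of the form $\sqrt{n}+O(n^{1/4})$, which is $\sqrt{n}+o(\sqrt{n})$, completing the argument.

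The main obstacle, such as it is, is the endpoint analysis in the last step: one must be slightly careful to use the tighter bound $\sqrt{n_1+n_2-3}$ (rather than the loose bound $\sqrt{2n/n_1}$) when $n_1$ is small, since the loose bound inflates the estimate by a $\sqrt{2}$ factor when $n_1=1$. Everything else is direct verification of a standard grid path decomposition and a straightforward application of Theorem~\ref{th:pl}.
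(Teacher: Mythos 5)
Your proposal is built on exactly the same column-pair path decomposition and the same invocation of Theorem~\ref{th:pl} as the paper, so the core construction is identical. The only difference is the final asymptotic step: the paper simply case-splits on $n_1 \in \Theta(1)$ (where $d \leq n$ trivially gives $\lceil\sqrt{d-1}\rceil < \sqrt{n}$ and $pl = n_1$ is constant) versus $n_1 \in \omega(1)$ (where $n_2 = n/n_1 = o(n)$ forces $d = o(n)$, so $\lceil\sqrt{d-1}\rceil = o(\sqrt{n})$ and $pl = n_1 \leq \sqrt{n}$). Your step instead analyzes the bound as a function of $n_1$ over $[1,\sqrt{n}]$.

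There is a small but genuine gap in your endpoint argument as written. You establish convexity, hence endpoint-maximality, for the \emph{loose} function $f(n_1)=\sqrt{2n/n_1}+n_1$, and observe $f(1)=\sqrt{2n}+1$ is too large. You then substitute the \emph{tight} bound $g(n_1)=\sqrt{n_1+n/n_1-3}+n_1$ only at the point $n_1=1$. But to conclude $g(n_1)\leq\sqrt{n}+o(\sqrt{n})$ for \emph{all} $n_1$, you cannot mix: $g\leq f$ plus $f$ maxed at endpoints only yields $g\leq f(1)=\sqrt{2n}+1$ everywhere, which is a $\sqrt{2}$ factor off. You flag this yourself as ``the main obstacle,'' and it is fixable — e.g., argue convexity of $g$ directly, or observe that $f$ restricted to $[2,\sqrt{n}]$ is still convex with endpoint maxima $f(2)=\sqrt{n}+2$ and $f(\sqrt{n})=\sqrt{n}+\sqrt{2}\,n^{1/4}$, both of which are $\sqrt{n}+O(n^{1/4})$, and then check $g(1)=\sqrt{n-2}+1$ separately. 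The paper's two-case dichotomy on $n_1$ avoids this bookkeeping entirely and is cleaner, though the net content is the same.
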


\begin{proof}
Without loss of generality, assume $G$ is formed by $n_1$ rows and $n_2$ columns.
Consider a path decomposition $P= \{B_1, \ldots, B_{n_2-1}\}$, where $B_i$ includes vertices in columns $i$ and $i+1$ of $G$. The length of this decomposition is $n_1$. Now, if $n_1 \in \Theta(1)$, the burning time devised by Theorem~\ref{th:pl} is $\sqrt{d-1} + pl < \sqrt{n} + \Theta(1)$. If $n_1 \in \omega(1)$, the diameter $d$ will be asymptotically smaller than $n$, and the burning time will be $\sqrt{d-1} + pl < o(\sqrt{n}) + n_1 \leq o(\sqrt{n}) + \sqrt{n}$. \qed 
\end{proof}


\begin{corollary}\label{coro:plength}
Any interval graph $G$ of diameter $d$ and size $n$ can be burned within $\lceil \sqrt{d-1}\ \rceil + 1 \leq \lceil \sqrt{n}\ \rceil + 1$ rounds. 
\end{corollary}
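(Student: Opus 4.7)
The plan is to derive the corollary as an almost immediate consequence of Theorem~\ref{th:pl}, using the structural characterization of interval graphs by their path-length that was recalled in the preliminaries. First, I would invoke the fact, attributed earlier to Gilmore and Hoffman~\cite{gilmoreHoffman1964}, that a connected graph is an interval graph if and only if it admits a path decomposition of length at most $1$. Thus the given interval graph $G$ has path-length $pl \leq 1$, and such a path decomposition can actually be produced in linear time from the interval representation using the algorithm of Booth and Lueker~\cite{BoothL76}, so the resulting burning schedule is constructible efficiently.

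Next, I would feed this path decomposition into Theorem~\ref{th:pl}, which guarantees a burning schedule of length at most $\lceil \sqrt{d-1}\ \rceil + pl$. Plugging in $pl \leq 1$, we obtain a schedule of length at most $\lceil \sqrt{d-1}\ \rceil + 1$, which is exactly the first bound claimed.

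Finally, for the second inequality in the statement, I would use the trivial observation that the diameter of any graph on $n$ vertices satisfies $d \leq n-1 \leq n$, hence $\lceil \sqrt{d-1}\ \rceil + 1 \leq \lceil \sqrt{n}\ \rceil + 1$, completing the chain of inequalities.

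The main obstacle, such as it is, is not really in the argument itself (which is a direct specialization of Theorem~\ref{th:pl}) but in making sure all preconditions of that theorem are met: specifically, that we genuinely have a path decomposition (not just a tree decomposition) of length at most $1$, and that the characterization of interval graphs yields it constructively. Both are handled by the classical references, so no new ideas are required beyond carefully chaining them. \qed
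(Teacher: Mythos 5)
Your proof is correct and follows essentially the same route as the paper: obtain a path decomposition of $G$ with length at most $1$ (you cite the Gilmore--Hoffman characterization and the Booth--Lueker algorithm, while the paper constructs the decomposition directly by a left-to-right sweep of the interval representation), then apply Theorem~\ref{th:pl} with $pl\leq 1$. The only substantive addition on your end is spelling out the second inequality via $d\leq n-1$, which the paper leaves implicit.
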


\begin{proof}
There is an intuitive way to form a path decomposition of interval graphs with length 1. For that, one can scan intervals from left to right. Whenever a new interval $I$ is started, a new bag with vertices associated with intervals present at the starting point of $I$ is added. Similarly, when an interval $I'$ is ended, a new bag is added where the vertex associated with $I'$ is removed. It is easy to verify that this path decomposition is a valid decomposition of the graph. Vertices in each bag form a clique and hence the decomposition has path-length 1. So, by Theorem~\ref{th:pl}, it is possible to burn the graph in $\lceil \sqrt{d-1}\ \rceil +1$ rounds. \qed
\end{proof}

Finally, we show that the algorithm used to prove Theorem~\ref{th:pl} guarantees a $1+o(1)$-approximation factor.

\begin{corollary} 
Given any graph $G$ of bounded path-length, there is an algorithm with approximation factor $1+o(1)$. 
\end{corollary}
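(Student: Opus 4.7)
The plan is to combine Theorem~\ref{th:smallDiam} with Theorem~\ref{th:pl} in a simple dichotomy based on the diameter $d$ of the input graph $G$, crucially using the assumption that $pl$ is $O(1)$.

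In the trivial regime where $d$ is bounded by a constant, I would invoke Theorem~\ref{th:smallDiam} to produce an optimal schedule in polynomial time, which gives an approximation ratio of $1$; this case is then absorbed into the general bound below.

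In the interesting asymptotic regime where $d$ grows with $n$, I would run the algorithm from Theorem~\ref{th:pl} to obtain a burning schedule of length at most $\lceil \sqrt{d-1}\ \rceil + pl$. Comparing this against the lower bound $bn(G) \geq \lceil \sqrt{d}\ \rceil$ noted in the Contributions section, the approximation ratio is at most
\[
\frac{\lceil \sqrt{d-1}\ \rceil + pl}{\lceil \sqrt{d}\ \rceil} \;\leq\; \frac{\sqrt{d} + 1 + pl}{\sqrt{d}} \;=\; 1 + \frac{pl + 1}{\sqrt{d}},
\]
which tends to $1$ as $d \to \infty$ since $pl$ is a fixed constant by assumption, yielding the claimed $1+o(1)$ factor. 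One small bookkeeping point I would handle is that the overall approximation guarantee should be stated uniformly: using $\max\{1, 1 + (pl+1)/\sqrt{d}\}$ covers both the small- and large-diameter cases in a single expression.

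The main obstacle, to the extent there is one, will be to make sure the lower bound $bn(G) \geq \lceil \sqrt{d}\ \rceil$ is actually available and not merely asserted in passing. I would justify it by picking two vertices $u,v$ realizing the diameter, taking an isometric shortest path $P$ of length $d$ between them (so $P$ has $d+1$ vertices), and projecting each activator $a_i$ of any hypothetical $k$-round schedule to a nearest vertex of $P$. Since $P$ is isometric, the $i$-th projected fire covers at most $2(k-i)+1$ vertices of $P$; summing for $i=1,\ldots,k$ and requiring full coverage of the $d+1$ vertices of $P$ forces $k^2 \geq d+1$, which implies $k \geq \lceil \sqrt{d}\ \rceil$ as needed.
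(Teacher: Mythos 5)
Your proposal follows the same route as the paper's proof: dichotomize on the diameter $d$, invoking Theorem~\ref{th:smallDiam} when $d$ is bounded and Theorem~\ref{th:pl} when $d$ is asymptotically large, then divide the upper bound $\lceil\sqrt{d-1}\,\rceil+pl$ by a $\Theta(\sqrt{d})$ lower bound on $bn(G)$. The one place you diverge is that the paper simply cites~\cite{5BonatoJR16} for the lower bound $bn(G)\ge\lceil\sqrt{d+1}\,\rceil$, whereas you supply a self-contained proof via an isometric geodesic $P$ of length $d$: since $P$ is isometric, the triangle inequality forces any two vertices $p_j,p_{j'}$ of $P$ burned by the fire started at $a_i$ to satisfy $|j-j'|\le 2(k-i)$, so that fire can cover at most $2(k-i)+1$ vertices of $P$, and summing gives $k^2\ge d+1$. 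That argument is correct (and in fact yields the slightly sharper $\lceil\sqrt{d+1}\,\rceil$ you relax to $\lceil\sqrt{d}\,\rceil$), so it is a welcome bit of rigor rather than a detour. The only thing you omit that the paper's proof handles explicitly is that the corollary is stated for a graph of bounded path-length, not for a graph \emph{together with} a path decomposition; since Theorem~\ref{th:pl} takes a decomposition as input, you need to first compute one, e.g.\ via the $2$-approximation algorithm of~\cite{phdthesisTreeLength}, which still leaves $pl$ bounded and hence preserves the $1+o(1)$ ratio.
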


\begin{proof}
First, if the diameter $d$ of $G$ is bounded by a constant, we use Theorem~\ref{th:smallDiam} to optimally burn $G$. Next, assume $G$ has asymptotically large diameter. In case we are not provided with a path decomposition of $G$, we use the algorithm of~\cite{phdthesisTreeLength} to achieve a path decomposition with a length $pl$ that is at most twice the path-length of $G$ (and hence is bounded). Given the path decomposition, we apply Theorem~\ref{th:pl} to burn $G$ in $\lceil \sqrt{d-1}\ \rceil + pl$ rounds. An optimal burning schedule requires at least $\lceil \sqrt{d+1}\ \rceil$ rounds to complete the burning process~\cite{5BonatoJR16}. So, our algorithm achieves an approximation ratio of $\frac{\lceil \sqrt{d-1} \rceil + pl}{\lceil \sqrt{d+1} \rceil} < 1 + pl/\sqrt{d}$, which is $1+o(1)$ as $pl$ is bounded by a constant and $d$ is asymptotically large. \qed
\end{proof}

\subsection{Burning graphs of small tree-length}\label{sect:pl:tree}

In this section, we consider the burning problem in connected graphs of bounded tree-length. These graphs include trees as a subfamily (trees are chordal and hence have tree-length 1).
We note that trees are the ``hardest'' connected graphs to burn 
in the sense that if we can burn any tree of size $n$ in $f(n)$ rounds, then we can burn any graph $G$ of size $n$ in the same number of rounds (just take a spanning tree of $G$ and burn it in $f(n)$ rounds). 
In this section, instead of bounding the burning time as a function of $n$, we introduce an approximation algorithm for burning graphs of bounded tree-length. Bounding the approximation ratio might be more meaningful than experessing the burning time as a function of $n$ in the sense that many instances of trees can be burned in less than $\cil{\sqrt{n}}$ rounds. As an example, every star tree can be burned in 2 rounds. For trees, there is a simple algorithm with approximation factor 2~\cite{BonaKam}. Our algorithm in this section can be seen as an extension of this algorithm to graphs of bounded tree-length.

In what follows, we assume a tree decomposition \Tau of a graph $G$ is given; otherwise, we use the algorithm of~\cite{DourisboureG07} to obtain a 3-approximation of the tree-length. We use $tl$ to denote the length of \Tau. 
We assume that the tree \Tau is rooted at an arbitrary bag, and an arbitrary vertex in the root bag is called the ``origin'' vertex and denoted by $o$.

Our algorithm is based on a procedure named \bg that receives a graph $G$ and a positive integer $\g$ as its inputs. The output of \bg is either I) \ns, which means that there does not exist a burning schedule such that the burning process completes within $\g$ rounds, or, II) a burning schedule such that all vertices are burned within $2\g+4tl+1$ rounds. Let $\g^*$ denote the smallest value of $\g$ for which a schedule is returned by \bg. Since \bg returns \ns for $\g^*-1$, it follows that an optimal schedule requires at least $\g^*$ rounds to burn the graph, while the schedule returned by \bg for $\g^*$ burns the graph within $2\g^*+4tl+1$ rounds. The approximation factor of such a schedule is consequently $\frac{2\g^*+4tl+1}{\g^*} = 2 + (4tl+1)/\g^*$ which is $2+o(1)$ assuming $G$ has asymptotically large diameter and bounded tree-length.

Procedure \bg works by marking the vertices of $G$ in iterations. Initially, no vertex is marked. In what follows, we describe the actions of the algorithm in iteration $i \geq 1$. At the beginning of iteration $i$, an arbitrary unmarked vertex at maximum distance in $G$ from the origin $o$ is selected and called \emph{terminal} $t_i$.
Let $B_i$ be a bag of \Tau with minimum depth (distance from the root) that contains $t_i$. We traverse $B_i$ towards the root until we find a bag $B_i'$ such that all vertices in $B'_i$ are at distance at least $g$ from $t_i$ in $G$. If there is no such $B'_i$, the root of \Tau is chosen as $B'_i$. 
We select an arbitrary vertex in $B'_i$ as the $i^{th}$ activator and denote it with $c_i$.
After selecting $c_i$, all vertices that are within distance $(2\g-i+1) +4tl$ from $c_i$ are marked, and this ends iteration $i$. The above process continues until all vertices in \Tau are marked or when the number of iterations exceeds $\g$. 
Algorithm~\ref{Alg:bg} describes the \bg procedure in detail, and Figure~\ref{fig:exAlg} illustrates the algorithm.

\begin{algorithm}[!t]
\begin{algorithmic}
\State \textbf{Input:} A connected graph $G=(V,E)$, a tree decomposition $\mathcal{T}$ of $G$ with tree-length $tl$, an integer $\g \geq 1$. 
\State \textbf{Output:} Either ``\ns'' or a burning schedule $\langle c_1,c_2,\ldots,c_m \rangle$ with $m\leq \g+1$. 
\State \lett{$B_r$}{an arbitrary bag in $\mathcal{T}$}; \hspace*{.1cm} assume $\mathcal{T}$ is rooted at $B_r$ 
\State \lett{$o$}{an arbitrary vertex in $\mathcal{T}$;} \lett{$i$}{$0$}; \lett{Marked}{$\emptyset$}
\While{ $i \leq \g$ and Marked $\neq V$}
\State $i \gets i+1$ 
\State \lett{$t_i$}{argmax$_u \ d(u,o)$ such that $u \notin$ Marked}
\State \lett{$B_i$}{argmin$_C \	 d(C,B_r)$ such that $t_i \in C$}
\State \lett{\btemp}{$B_i$}
\While{$\exists \ v\in V$ such that $v \in \btemp$ \textbf{and} $d(v,t_i) \leq g -1$ \textbf{and} $\btemp \neq B_r$}
\State \lett{\btemp}{parent(\btemp)}
\EndWhile
\State \lett{$B'_i$}{\btemp}
\State $c_i\gets$ any vertex in $B'_i$
\ForAll {$u \in V$ such that $d(u,c_i) \leq (2\g - i + 1) + 4tl$}
\State Marked $\gets$ Marked $\cup \ \{u\}$ 
\EndFor
\EndWhile
\If {Marked == $V$}
    \State \textbf{return} $\langle c_1,c_2,\ldots, c_i\rangle$.
\Else
	\State \textbf{return} \ns
\EndIf
\end{algorithmic}
\caption{\bg Procedure} \label{Alg:bg}
\end{algorithm}

\begin{figure}
\begin{center}
\includegraphics[scale=.4, trim = 0mm 446mm 55mm 0mm , clip]{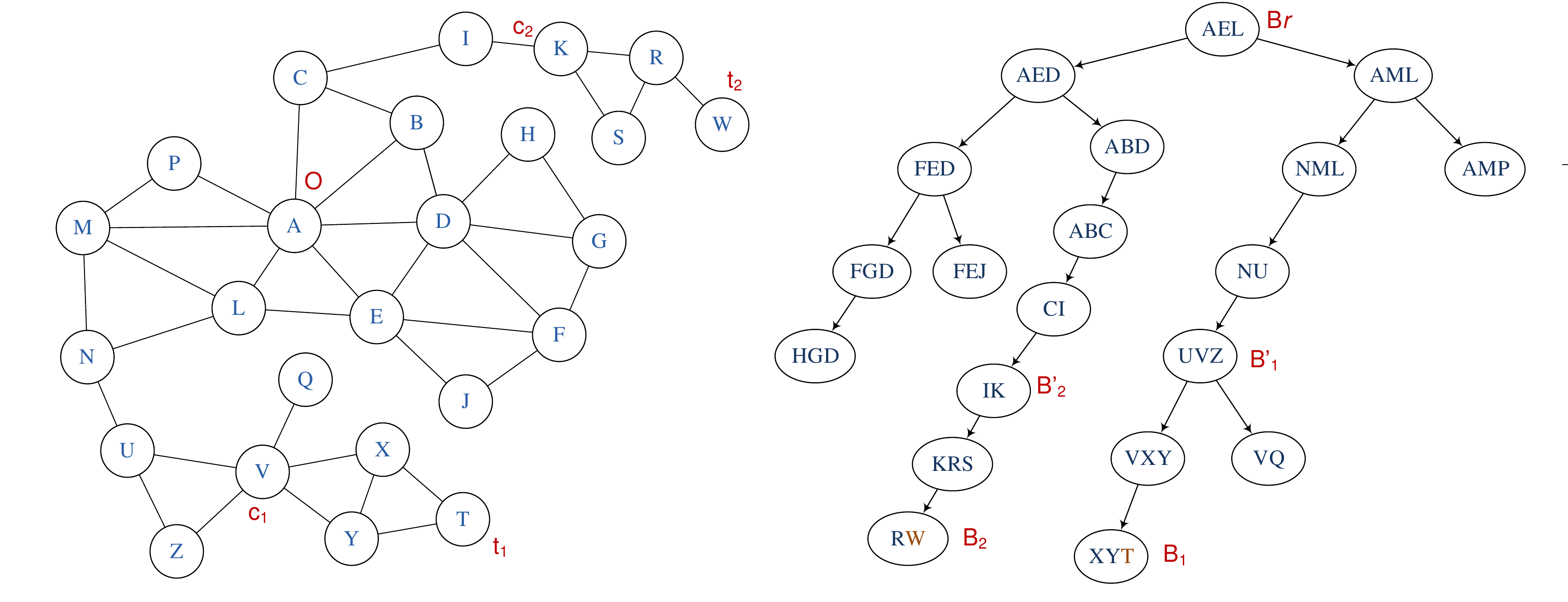}
\end{center}
\caption{\footnotesize An illustration of the \bg procedure with parameter $g=2$ executed on a chordal graph $G$ (left). On the right, a tree decomposition of tree-length $tl=1$ is rooted at arbitrary bag $B_r=\{A,E,L\}$; an arbitrary vertex $A\in B_r$ is selected as the origin $o$. In iteration $i=1$, the furthest unmarked vertex from $A$ is vertex $T$; so $t_1=T$ and $B_1=\{X,Y,T\}$. Traversing towards the root from $B_1$, the first bag in which all vertices have distance at least $g=2$ from $T$ in $G$ is $B'_1=\{U,V,Z\}$. An arbitrary vertex from $B'_1$ is selected as an activator, say, $c_1=V$. All vertices at distance $(2g-i+1)+4tl = 8$ from $V$ are marked. The only unmarked vertex would be $W$, which will be selected as $t_2$ in the next iteration.  }\label{fig:exAlg}
\end{figure}

To prove the desired upper bound on the approximation factor of our algorithm, first we establish the following lemma that provides an upper bound on the burning time of any burning schedule returned by \bg.

\begin{lemma}\label{lem:upperBoundTL}
If \bg on input $G$ returns a burning schedule $A$, then the burning process corresponding to schedule $A$ completes within $2\g + 4tl +1$ rounds.
\end{lemma}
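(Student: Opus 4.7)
The plan is to establish the bound by a direct correspondence between the marking condition used inside \bg and the set of vertices that are burning by round $2g+4tl+1$ when the returned activators are used in the obvious schedule.

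First, I would observe that the main while loop of \bg increments $i$ at most $g+1$ times before the guard $i\le g$ fails, so the returned schedule $\langle c_1,c_2,\ldots,c_m\rangle$ has length $m\le g+1$. To turn this into a schedule over $2g+4tl+1$ rounds, I would simply extend it by activating any already-burning vertex (an arbitrary ``filler'') in each round after round $m$. This introduces no new fire, and since the fire-spread rule depends only on the set of already-burning vertices and is monotone under additional activations, the extension cannot destroy any existing fire. In particular, in this extended schedule each $c_i$ is still activated precisely at round $i$.

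Next, I would use the fact that \bg only returns a schedule when Marked $=V$, together with the fact that a vertex $u$ is added to Marked during iteration $i$ precisely when $d(u,c_i)\le (2g-i+1)+4tl$. In the extended schedule, $c_i$ is activated at round $i$, so by the end of round $r$ the fire originating from $c_i$ has had $r-i$ rounds to spread and therefore has burned every vertex at distance at most $r-i$ from $c_i$. Plugging in $r=2g+4tl+1$ gives $r-i=(2g-i+1)+4tl$, which is exactly the marking radius used in iteration $i$. Hence every vertex marked in iteration $i$ is burning by round $2g+4tl+1$, and since all vertices are marked at the moment \bg returns a schedule, the burning process completes within $2g+4tl+1$ rounds.

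I do not anticipate a significant obstacle here. The lemma is essentially a bookkeeping check confirming that the radii $(2g-i+1)+4tl$ in Algorithm~\ref{Alg:bg} are calibrated so that activator $c_i$, fired in round $i$, has its reach telescope into the common budget of $2g+4tl+1$ rounds. The only mildly subtle point is verifying that extending a short schedule with dummy activators is legitimate within the burning model, which follows directly from the monotonicity of fire spread. The harder structural work, namely showing that \bg does in fact reach the state Marked $=V$ whenever $g\ge g^*$, I would defer to a separate lemma that exploits the tree decomposition to control the distances $d(c_i,t_j)$ across iterations.
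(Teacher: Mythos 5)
Your proposal is correct and follows essentially the same argument as the paper: observe that every vertex $u$ is marked in some iteration $i$ with $d(u,c_i)\le(2g-i+1)+4tl$, note that $c_i$ is activated in round $i$, and conclude that $u$ burns by round $i+(2g-i+1)+4tl=2g+4tl+1$. The only addition you make is spelling out that a schedule of length $m\le g+1$ may be padded with filler activators through round $2g+4tl+1$, a point the paper leaves implicit but which is indeed justified by the monotonicity of fire spread.
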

\begin{proof}
The procedure returns a schedule only if all vertices are marked within $\g+1$ iterations. For each vertex $u$, consider the iteration $i$ in which $u$ is marked. From the definition of \bg, it follows that $d(u,c_i) \leq (2\g - i + 1) + 4tl$. Burning the graph according to schedule $A$, activator $c_i$ is burned in or before round $i$. So, vertex $u$ is burned no later than round $i + d(u,c_i) \leq 2\g + 4tl +1$. \qed 
\end{proof}

Our next goal is to establish a lower bound for the burning number of graph $G$ in the case that \bg returns \ns on input $G$. To this end, we first prove the following useful lemma.

\begin{lemma}\label{lem:distB}
For any connected graph $G$, after each iteration $i$ of \bg on input $G$, each vertex in $B'_i$ is within distance $\g+2tl - 1$ of $t_i$ in $G$. 
\end{lemma}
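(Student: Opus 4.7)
The approach is a case analysis on how the inner while-loop of \bg that produces $B'_i$ terminates. Recall that the loop exits either when the current \btemp equals the root $B_r$, or when no vertex of \btemp sits within distance $\g-1$ of $t_i$ in $G$. I will unify both termination modes by splitting on whether $B'_i$ itself contains some vertex within $\g-1$ of $t_i$ (Case A) or not (Case B).

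In Case A, let $v \in B'_i$ with $d(v, t_i) \le \g - 1$. For any $w \in B'_i$, the tree-length bound applied inside the common bag $B'_i$ gives $d(w, v) \le tl$, so by the triangle inequality $d(w, t_i) \le d(w,v) + d(v, t_i) \le tl + (\g - 1) \le \g + 2tl - 1$, finishing this case.

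In Case B, no vertex of $B'_i$ lies within $\g - 1$ of $t_i$. Since $t_i \in B_i$ trivially satisfies the loop condition, we must have $B'_i \neq B_i$, and by inspection of the loop the immediate predecessor of $B'_i$ along the traversal---namely the child $C'$ of $B'_i$ on the tree-path from $B_i$ in \Tau---was entered under a true loop guard, hence $C'$ contains some vertex $v$ with $d(v, t_i) \le \g - 1$. The main obstacle now is to bridge $w \in B'_i$ to this $v \in C'$, since $w$ and $v$ need not share a bag. For this I will prove that in a connected graph, any two bags adjacent in the tree decomposition share at least one vertex. Concretely, deleting the edge of \Tau between $B'_i$ and $C'$ splits the decomposition into two subtrees $T_1 \ni B'_i$ and $T_2 \ni C'$; by the connected-subtree property of tree decompositions, the union of bags of $T_1$ and the union of bags of $T_2$ intersect in exactly $B'_i \cap C'$. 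Were this intersection empty, $V(G)$ would partition into two nonempty classes with no edges between them (because every edge of $G$ lies inside some bag, and no bag is in both $T_1$ and $T_2$), contradicting connectedness of $G$.

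Having secured some $u \in B'_i \cap C'$, I finish by three applications of the bag-diameter bound: $d(w, u) \le tl$ since $w, u \in B'_i$, $d(u, v) \le tl$ since $u, v \in C'$, and $d(v, t_i) \le \g - 1$ by the choice of $v$. The triangle inequality along $w \to u \to v \to t_i$ then yields $d(w, t_i) \le 2tl + (\g - 1) = \g + 2tl - 1$, which is precisely the desired bound.
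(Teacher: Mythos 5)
Your proposal is correct and follows essentially the same route as the paper: a case split on the termination of the inner loop, the key claim that $B'_i$ and its child $C'$ on the traversal path share a vertex (proved by the connected-subtree property plus connectivity of $G$), and then chaining the triangle inequality through that shared vertex. Your Case~A/Case~B split (whether $B'_i$ contains a vertex within distance $\g-1$ of $t_i$) is slightly more careful than the paper's split on $B'_i = B_i$ versus $B'_i \neq B_i$, since it cleanly handles the corner case where the loop exits at $B'_i = B_r$ while $B'_i$ still contains a vertex near $t_i$ --- a case the paper's phrasing (``$B'_i$ is the first ancestor in which all vertices are at distance at least $g$'') glosses over --- but the underlying argument is the same.
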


\begin{proof}
First, consider the case where $B'_i = B_i$. Since $t_i \in B_i$, the definition of tree-length implies that each vertex in $B'_i$ is within distance $tl \leq \g+2tl$ of $t_i$ in $G$, as desired. Next, suppose that $B'_i \neq B_i$, and consider an arbitrary vertex $u \in B'_i$. Let $C$ be the child of $B'_i$ on the path between $B_i$ and $B'_i$. Since $B'_i$ is the first ancestor of $B_i$ in which all vertices are at distance at least $g$ from $t_i$ in $G$, we conclude that there is a vertex $v \in C$ that is within distance $g-1$ from $t_i$ in $G$, that is $d(v,t_i) \leq g-1$. 

\vspace{1mm}
\textbf{Claim:} There exists a vertex $w \in G$ such that $w \in B'_i$ and $w \in C$.
\vspace{1mm}

To prove the claim, assume otherwise, i.e., $B'_i \cap C = \emptyset$. Denote by \subTau{C} the subtree of the tree decomposition rooted at bag $C$. As \Tau is a valid tree decomposition, for any choice of vertex $x \in V(G)$, the set of bags containing $x$ forms a connected component. Therefore, $B'_i \cap C = \emptyset$ implies that each $x \in V(G)$ that is contained in a bag of \subTau{C} is not contained in any bag of \Tau $\setminus$ \subTau{C}. Similarly, each $y \in V(G)$ that is contained in a bag of \Tau $\setminus$ \subTau{C} is not contained in any bag of \subTau{C}. Thus, we have shown that for each $x \in C$ and $y \in B'_i$, the vertices $x,y$ do not appear together in any bag of \Tau. As \Tau is a valid tree decomposition, it follows that there is no edge between the set of vertices induced by the bags of \subTau{C} and the set of vertices induced by the bags of \Tau $\setminus$ \subTau{C}, which implies that $G$ is not connected, a contradiction. This completes the proof of the claim.

Let $w \in B'_i \cap C$. Since $u$ and $w$ both appear in $B'_i$, we have $d(u,w)\leq tl$. Similarly, since $w$ and $v$ both appear in $C$ and $d(v,w) \leq tl$. Therefore, the distance between $u$ and $t_i$ is $d(u,t_i) \leq d(u,w) + d(w,v) + d(v,t_i) \leq 2tl + \g - 1$. \qed
\end{proof}

\begin{lemma}\label{lem:lowerBoundTL}
If the \bg procedure returns \ns for inputs $G,g$, then there is no burning schedule such that the corresponding burning process burns all vertices of $G$ in fewer than $\g$ rounds.
\end{lemma}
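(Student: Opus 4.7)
The plan is to establish the contrapositive: if $G$ has burning number at most $g-1$, then \bg on inputs $(G,g)$ returns a valid schedule rather than \ns. Fix an optimal schedule $\langle a_1,\ldots,a_k\rangle$ with $k \leq g-1$. For each terminal $t_i$ selected by \bg, pick an OPT activator $a_{r(i)}$ satisfying $d(t_i, a_{r(i)}) \leq k - r(i)$; such an activator must exist because $t_i$ is burned by round $k$. The central claim will be that the mapping $i \mapsto r(i)$ is injective over the iterations of \bg. Given injectivity, because $r(i) \in \{1,\ldots,k\}$ the procedure cannot pick more than $k \leq g-1$ distinct terminals, so by the start of iteration $k+1$ the invariant Marked $= V$ must already hold (otherwise picking another terminal would violate injectivity); consequently \bg exits successfully with a complete schedule.

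To prove injectivity, I argue by contradiction: suppose $r(i) = r(i') = r$ for some $i < i'$, so $t_i, t_{i'}$ both lie within distance $k - r \leq g-1-r$ of the common activator $a_r$. The goal is to bound $d(c_i, t_{i'})$ from above by the marking radius $(2g-i+1)+4tl$, which would contradict the fact that $t_{i'}$ is still unmarked after iteration $i$. The key structural input is that any bag containing $a_r$ must lie inside \subTau{B^-_i}, where $B^-_i$ is the child of $B'_i$ on the tree path to $B_i$. This follows from Lemma~\ref{lem:crucialBags}: if the tree path from $B_i$ to a bag containing $a_r$ were to pass through $B'_i$, then $B'_i$ would contain a vertex of a shortest $t_i$-to-$a_r$ path, hence a vertex within distance $k - r < g$ of $t_i$, contradicting the algorithm's choice of $B'_i$. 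Applying Lemma~\ref{lem:crucialBags} again to the tree path from $B^-_i$ to a bag containing $a_r$ yields a vertex $u \in B^-_i$ on a shortest $t_i$-to-$a_r$ path, satisfying $d(u, a_r) \leq k - r$. Combined with the connector vertex $w \in B'_i \cap B^-_i$ whose existence is established inside the proof of Lemma~\ref{lem:distB} (giving $d(c_i, w) \leq tl$ and $d(w, u) \leq tl$), the triangle inequality delivers $d(c_i, a_r) \leq 2tl + (k - r)$ and therefore $d(c_i, t_{i'}) \leq 2tl + 2(k - r) \leq 2g + 2tl - 2 - 2r$.

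The main obstacle I anticipate is twofold. First, the bound $2g + 2tl - 2 - 2r$ beats the marking radius $(2g-i+1)+4tl$ only when $i \leq 2r + 3 + 2tl$, so the straight triangle-inequality argument falls short when $r$ is small while $i$ is large. I expect the gap can be closed by exploiting that \bg selects terminals greedily by farthest distance from the origin $o$, giving the extra constraint $d(t_{i'}, o) \leq d(t_i, o)$ and restricting where $a_r$ can sit relative to the root bag $B_r$. Equivalently, one may choose the repeat pair $(i,i')$ more carefully (e.g.\ the first iteration at which a repeat occurs) or assign $t_i$ to a different OPT activator than merely the one reaching it — for instance, an activator whose entire coverage ball is contained inside \subTau{B^-_i} — so that injectivity becomes structural rather than purely metric. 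Second, the separation argument relies on $B'_i \neq B_r$; the boundary case $B'_i = B_r$ needs separate treatment, where the mark ball of radius $(2g-i+1)+4tl$ centered at $c_i \in B_r$ should swallow $t_{i'}$ by direct diameter reasoning. Unifying these two regimes into a single clean injectivity argument is what I expect to be the technical crux of the lemma.
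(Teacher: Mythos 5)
Your high-level plan (contrapositive, then injectivity of a map from \bg's terminals to \opt's activators) is a reasonable framing, but the gap you flag at the end is genuine and it is not a matter of ``unifying two regimes'' within the same calculation. The quantity you are trying to control --- $d(c_i,t_{i'})$ versus the marking radius $(2g-i+1)+4tl$ --- is the wrong one: the marking radius shrinks linearly in $i$ while your triangle-inequality bound $2tl+2(k-r)$ does not, so the contradiction genuinely disappears once $i$ is large relative to $r$. Showing ``$t_{i'}$ should already have been marked by iteration $i$'' is strictly stronger than what the lemma needs, and it is false in general. There is also a secondary structural issue: once both $B_i$ and a bag containing $a_r$ lie inside \subTau{B^-_i}, the tree path between them need not pass through $B^-_i$ itself, so Lemma~\ref{lem:crucialBags} does not hand you a vertex of the shortest $t_i$-$a_r$ path inside $B^-_i$ as you assume. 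Finally, you never argue that the $g+1$ iterations produce $g+1$ \emph{distinct} terminals.

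The paper sidesteps all of this by proving a packing statement rather than a marking statement. It first notes (via Lemma~\ref{lem:distB}) that $d(t_i,c_i)\le g+2tl-1\le(2g-i+1)+4tl$, so each $t_i$ is marked in its own iteration and the $g+1$ terminals are distinct. It then discards at most one terminal --- the one whose $B'_k$ is the root bag --- giving a set $S$ with $|S|\ge g$; this is exactly your $B'_i=B_r$ boundary case, and excluding it is the clean fix. The heart of the proof is Claim~1: for $i<j$ in $S$, $t_j$ appears in a bag outside \subTau{B'_i}. This is proved by comparing $d(t_i,o)\ge d(t_j,o)$ (the greedy choice you correctly suspected must be used) through a vertex $x_j\in B'_i$ on the shortest $t_j$-$o$ path, yielding $d(t_j,c_i)\le g+4tl-1$; crucially this bound is \emph{uniform in $i$}, so it beats $(2g-i+1)+4tl$ for every $i\le g+1$ --- exactly the uniformity your bound lacks. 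Claim~1 then implies the tree path from $B_i$ to $B_j$ passes through $B'_i$, so Lemma~\ref{lem:crucialBags} gives $x\in B'_i$ on the shortest $t_i$-$t_j$ path, and $d(t_i,t_j)\ge d(t_i,c_i)+d(t_j,c_i)-2tl>g+(g+4tl)-2tl>2g$, using $d(t_i,c_i)\ge g$ (definition of $B'_i$) and that $t_j$ was unmarked after iteration $i$. With $|S|\ge g$ terminals pairwise more than $2g$ apart, no ball of radius $g-1$ contains two of them, so any schedule completing in $\le g-1$ rounds (hence $\le g-1$ activators) cannot cover $S$, a contradiction. In short: aim the triangle inequality at the origin $o$ rather than at an \opt activator, prove pairwise separation $>2g$, and then finish with a one-line packing argument instead of a marking argument.
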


\begin{proof}
From the definition of \bg, the value \ns is returned when $\g+1$ iterations been completed and there exists an unmarked vertex in $G$. 
For any iteration $i$, Lemma~\ref{lem:distB} ensures that the distance between $c_i$ and $t_i$ is at most $\g+2tl-1$. In iteration $i$, all vertices within distance $(2\g-i+1)+4tl \geq \g+2tl-1$ from $c_i$ in $G$ are marked, so vertex $t_i$ is marked by the end of iteration $i$. As a result, the $g+1$ iterations involve $g+1$ different terminal vertices $t_1,\ldots,t_{g+1}$.

Let $S$ be the set consisting of the terminal vertices $t_1,\ldots,t_{g+1}$, excluding the terminal $t_k$ whose corresponding activator $c_k$ is located in the root bag $B_r$ of \Tau, if such a terminal exists. Note that there are at least $g$ vertices in $S$. The following claim gives a useful fact about the bags that contain terminals from $S$.

\vspace{1mm}
\textbf{Claim 1:} For any $t_i, t_j \in S$ such that $i < j$, terminal $t_j$ appears in a bag of \Tau $\setminus$ \subTau{B'_i}.
\vspace{1mm}

To prove the claim, we assume, for the sake of contradiction, that all bags containing $t_j$ appear in the subtree of \Tau rooted at $B'_i$. 
Since the bags that contain $t_j$ are in the subtree rooted at $B'_i$, by Lemma~\ref{lem:crucialBags}, the shortest path between $t_j$ and the origin $o$ passes through a vertex $x_j \in B'_i$, that is $d(t_j,o) = d(t_j,x_j)+d(x_j,o)$. 
%
Since $t_i$ has the maximum distance from the origin among unmarked vertices when it is selected as the $i^{th}$ terminal, we have $d(t_i,o) \geq d(t_j,o)$. We can write: 


%

\begin{align}
d(t_i,o) \geq d(t_j,o) & \Leftrightarrow  d(t_i,c_i)+d(c_i,o) \geq d(t_j,x_j)+d(x_j,o) \label{line1}  \\ & \Leftrightarrow d(t_i,c_i)+d(c_i,o) \geq (d(t_j,c_i)-d(c_i,x_j))+(d(c_i,o)-d(x_j,c_i)) \label{line2}  \\ & \Leftrightarrow d(t_i,c_i) \geq  d(t_j,c_i)-2d(c_i,x_j))\nonumber  \\ & \Leftrightarrow d(t_i,c_i) \geq  d(t_j,c_i)-2tl \label{line3} \\ & \Leftrightarrow  \g + 2tl - 1 \geq d(t_j,c_i)-2tl  \label{line4}
\end{align}

Inequalities~\ref{line1} and~\ref{line2} follow from triangle inequality and the fact that $x_j$ is on the shortest path between $t_j$ and $o$. Inequality~\ref{line3} holds because $c_i$ and $x_j$ are both in $B'_i$ and hence their distance is at most $tl$. Inequality~\ref{line4} follows from Lemma~\ref{lem:distB}. From Inequality~\ref{line4}, we conclude that $t_j$ is within distance $\g + 4tl - 1 \leq (2g-i+1) + 4tl$ from $c_i$, that is, $t_j$ should have been marked at the end of iteration $i$; this contradicts that $t_j$ was chosen as a terminal from among unmarked vertices in iteration $j > i$. This completes the proof of Claim 1.

Next, using Claim 1, we prove that any two terminals in $S$ are far apart in $G$.

\vspace{1mm}
\textbf{Claim 2:} The pairwise distance between any two terminals in $S$ is at least $2\g$ in $G$.  \\
\vspace{1mm}
Consider any two terminals $t_i,t_j$ in $S$, and assume, without loss of generality, that $i < j$. As $B'_i$ is an ancestor of $B_i$, Claim 1 implies that the shortest path in \Tau between bags $B_i$ and $B_j$ passes through $B'_i$.  
By Lemma~\ref{lem:crucialBags}, it follows that $B'_i$ contains a vertex $x$ on the shortest path between $t_i$ and $t_j$ in $G$. We can write 
\begin{align*}
d(t_i,t_j) & = d(t_i,x) + d(t_j,x) \\ & \geq (d(t_i,c_i) - d(x,c_i)) + (d(t_j,c_i)-d(x,c_i)) \\ & = d(t_i,c_i) + d(t_j,c_i) -2d(x,c_i) \\ & \geq  d(t_i,c_i) + d(t_j,c_i) - 2tl
\end{align*} 
The first two inequalities follow directly from the triangle inequality, and the last inequality holds because $x$ and $c_i$ both appear in $B'_i$ and we have $d(x,c_i) \leq tl$. 
By definition, all vertices in $B'_i$, and in particular $c_i$, are at distance at least $g$ from $t_i$, that is $d(t_i,c_i)\geq g$. 
Moreover, since terminals are chosen from among unmarked vertices, and $t_j$ was chosen as a terminal in iteration $j>i$, it follows that $t_j$ was not marked at the end of iteration $i$, which implies that $d(t_j,c_i) > (2g - i + 1) + 4tl \geq g +4tl$. We can conclude $d(t_i,t_j) > g + (g +4tl) - 2tl >2g$. This completes the proof of Claim 2.

Using Claim 2, we are able to prove the statement of the lemma. Assume, for the sake of contradiction, that there is a burning schedule $A$ that burns all vertices of $G$ in $t \leq g-1$ rounds. In particular, this means that $A$ specifies at most $t \leq g-1$ activators. In a burning process that lasts $t$ rounds, a vertex $v \in V(G)$ is burned only if $v$ is within distance $g-1$ from at least one activator. Thus, if every vertex of $G$ is burned, then each terminal in $S$ is burned, so each terminal is within distance $g-1$ from at least one activator. However, no two terminals $t_i,t_j \in S$ can be within distance $g-1$ from the same activator, since, by Claim 2, $d(t_i,t_j)$ is at least $2g$. This implies that $A$ contains at least $|S| = g$ activators, a contradiction. Hence, any burning schedule requires at least $g$ rounds, which completes the proof.  \qed

\end{proof}

\begin{theorem}
Given a graph $G$, and a tree decomposition $tl$ of $G$, there is a polynomial-time algorithm for burning $G$ that achieves an approximation factor of $2+o(1)$. 
\end{theorem}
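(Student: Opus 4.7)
The plan is to invoke the \bg procedure as a subroutine and select the optimal parameter $\g$ via a linear search. If no tree decomposition is supplied, we first invoke the algorithm of~\cite{DourisboureG07} to obtain a decomposition whose length $tl$ is within a factor 3 of the true tree-length (in particular, still bounded if the tree-length of $G$ is bounded). We then split into two cases based on the diameter $d$ of $G$. If $d$ is bounded by a constant, we simply invoke Theorem~\ref{th:smallDiam} to compute an optimal schedule in polynomial time. Otherwise, we assume $d$ is asymptotically large.

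For the large-diameter case, the algorithm runs \bg on inputs $(G,\g)$ for $\g = 1, 2, \ldots, n$ and returns the burning schedule produced by the smallest value $\g^*$ for which \bg does not output \ns. Since \bg terminates in polynomial time (each iteration involves at most a BFS from $c_i$ and a traversal of \Tau), the overall running time is polynomial. It remains to bound the approximation ratio.

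For the upper bound on the algorithm's output, Lemma~\ref{lem:upperBoundTL} guarantees that the schedule returned for parameter $\g^*$ burns $G$ within $2\g^* + 4tl + 1$ rounds. For the matching lower bound on $bn(G)$, observe that by the definition of $\g^*$ we have that \bg returned \ns on parameter $\g^* - 1$, so Lemma~\ref{lem:lowerBoundTL} implies that $bn(G) \geq \g^*$. (For the corner case $\g^* = 1$ we trivially have $bn(G) \geq 1$.) Combining these bounds, the approximation factor is at most
\[
\frac{2\g^* + 4tl + 1}{\g^*} \;=\; 2 + \frac{4tl + 1}{\g^*}.
\]

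Finally, we argue this ratio is $2 + o(1)$. Since $bn(G) \geq \lceil \sqrt{d+1}\, \rceil$ (a known lower bound via the diameter, as cited earlier in the excerpt), and since $\g^* \geq bn(G)$ is false in general but we do have $bn(G) \geq \g^*$, we instead use a different argument: actually, running \bg with any $\g \geq bn(G)$ trivially yields a valid schedule (informally, because then the distance condition $(2\g - i + 1) + 4tl$ covers everything; more precisely, Lemma~\ref{lem:lowerBoundTL} contrapositively gives that if a schedule of length at most $\g$ exists, then \bg does not return \ns on $\g + 1$). Hence $\g^* \leq bn(G) + 1 \leq n$, and more importantly $\g^* \geq \lceil \sqrt{d+1}\,\rceil - O(tl)$ grows without bound as $d \to \infty$. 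With $tl$ bounded and $\g^* \to \infty$, we obtain $(4tl + 1)/\g^* = o(1)$, giving the desired approximation factor of $2 + o(1)$.

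The main obstacle is the final step: carefully justifying that $\g^*$ is asymptotically large so that the additive error $4tl + 1$ is negligible. This requires relating $\g^*$ to $bn(G)$ in both directions — the lower bound $bn(G) \geq \g^*$ comes from Lemma~\ref{lem:lowerBoundTL}, while an upper bound $\g^* \leq bn(G) + O(tl)$ (needed to conclude $\g^* \to \infty$ when $bn(G) \to \infty$, which holds since $bn(G) \geq \lceil \sqrt{d+1}\,\rceil$ and $d$ is asymptotically large) follows from the contrapositive of Lemma~\ref{lem:lowerBoundTL} applied with parameter $bn(G)$.
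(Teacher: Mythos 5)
Your proposal takes essentially the same route as the paper: handle small-diameter graphs via Theorem~\ref{th:smallDiam}, otherwise iterate \bg to find the smallest $\g^*$ for which a schedule is returned, then combine Lemma~\ref{lem:upperBoundTL} and Lemma~\ref{lem:lowerBoundTL} to bound the ratio by $2 + (4tl+1)/\g^*$. The linear search for $\g^*$ (rather than the paper's binary search over $[1,d]$) is a cosmetic difference; both are polynomial.

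However, there is a genuine gap in your final step, where you try to justify $\g^* \to \infty$. You invoke the contrapositive of Lemma~\ref{lem:lowerBoundTL} to get $\g^* \leq bn(G)+1$, and you also note $bn(G) \geq \g^*$; but these are both \emph{upper} bounds on $\g^*$ in terms of $bn(G)$ (or, equivalently, lower bounds on $bn(G)$), not ``both directions'' as you claim. An upper bound on $\g^*$ cannot yield $\g^* \to \infty$ from $bn(G)\to\infty$ — the logic runs the wrong way. The bound you actually need, a lower bound on $\g^*$, comes from Lemma~\ref{lem:upperBoundTL}: since \bg's returned schedule completes in $2\g^*+4tl+1$ rounds, we get $bn(G) \leq 2\g^* + 4tl + 1$, hence $\g^* \geq (bn(G) - 4tl - 1)/2 \geq (\lceil\sqrt{d+1}\,\rceil - 4tl - 1)/2 \to \infty$ when $tl$ is bounded and $d\to\infty$. (Your stated inequality $\g^* \geq \lceil\sqrt{d+1}\,\rceil - O(tl)$ also misses this factor of $2$, though that is immaterial to the asymptotics.) The paper elides this step with a one-line remark, but the justification should come from the upper-bound lemma, not the lower-bound one as in your writeup.
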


\begin{proof}
In case the diameter of $G$ is bounded, apply Theorem~\ref{th:smallDiam} to burn it optimally in polynomial time. Otherwise, repeatedly apply the \bg procedure to find the smallest parameter $g^*$ for which the algorithm returns a schedule $A$. By Lemma~\ref{lem:upperBoundTL}, the burning process corresponding to schedule $A$ completes within $2g^* + 4tl +1$ rounds. Meanwhile, by Lemma~\ref{lem:lowerBoundTL}, no burning schedule can result in burning all vertices of $G$ within $g^*-1$ rounds. The approximation ratio of the algorithm will be $\frac{2g^*+4tl+1}{g^*} = 2+(4tl+1)/g^*$, which is $2+o(1)$ as the diameter of $G$, and hence $g^*$, is asymptotically large. 

With respect to the time complexity, finding $g^*$ requires a binary search in the range $g\in [1,d]$, that is, calling \bg $O(\log d)$ times (where $d$ is the diameter of $G$). Meanwhile, each call to \bg with parameter $g$ has at most $g \leq d$ iterations. Iteration $i$ involves finding the unmarked vertex $t_i$ with maximum distance from the origin; this can be done in $O(n^2)$ time. Finding $B'_i$ and declaring its center $c_i$ also takes $O(n^2$); this can be done using Dijkstra's algorithm to find the distances between $t_i$ and all other vertices, and then, in \Tau, moving upwards from $B_i$ towards the root while checking the distance between $t_i$ and the vertices in each visited bag. Finally, marking all vertices within distance $(2g-i+1)+4tl$ from $c_i$ can be done in a Breadth-First manner in $O(n^2)$ time. In summary, each iteration of \bg takes $O(n^2)$ time, and since there are $O(d)$ iterations, the complexity of \bg is $O(d n^2)$. The algorithm's overall time complexity would be $O(n^2 d \log d) \in O(n^3 \log n)$.
\qed 
\end{proof}
\section{Concluding Remarks}
We introduced algorithms for burning graphs of bounded path-length and tree-length. 
Studying the burning problem with respect to other graph parameters is an interesting direction for future work. In particular, one can consider the problem in graphs of bounded path-width and tree-width. 
As mentioned earlier, the burning problem remains NP-hard for basic graph families such as trees and forests of disjoint paths. While there is a simple algorithm with approximation ratio of at most 3 for general graphs~\cite{BonaKam}, improving this ratio is relatively hard. We provided an algorithm with approximation factor of $2+o(1)$ for burning graphs of bounded tree-length. Providing algorithms with similar guarantees for other graph families 
such as planar graphs is an interesting topic for future research. 

Another potential direction for future research is to consider distributed algorithms for choosing the activators in a burning sequence. In fact, graph burning can be viewed as a variant of constructing $k$-dominating sets or clustering \cite{KuttenP98}, which are often used in efficient network algorithms and distributed data structures. The activators specified in a burning sequence can be considered the dominators (or cluster centers) but since they are all activated at different times and grow at the same rate, the radius of each cluster is different rather than a fixed value $k$ for all clusters. It would be interesting to investigate potential applications of this kind of network clustering, and to determine how it can be constructed efficiently in a distributed manner.

\bibliographystyle{plain}
\bibliography{refs}

\end{document}